\numberwithin{equation}{section}
\newcommand{\e}{\varepsilon}
\newcommand{\R}{\mathbb{R}}
\newcommand{\N}{\mathbb{N}}
\newtheorem{de}{Definition}
\newtheorem{lem}{Lemma}
\newtheorem{prop}{Proposition}
\newtheorem{thm}{Theorem}
\newtheorem{cor}{Corollary}
\newtheorem{rem}{Remark}
\newcommand{\dd}{\partial}
\newcommand{\supp}{\operatorname{supp}}
\newcommand{\lap}{\Delta}
\renewcommand{\div}{\operatorname{div}}
\newcommand{\ont}{\int_0^T\!\!\!\int_\Omega}
\renewcommand{\O}{\mathcal{O}}
\title{On a comparison principle for Trudinger's equation}
\author{Erik Lindgren and Peter Lindqvist}
\begin{document}

\maketitle

\begin{abstract}
\noindent We study the comparison principle for non-negative solutions of the equation 
$$
\frac{\dd\,(|v|^{p-2}v)}{\dd t}\,=\, \div (|\nabla v|^{p-2}\nabla v), \quad 1<p<\infty.
$$
This equation is related to extremals of Poincar\'e inequalities in Sobolev spaces. We apply our result to obtain pointwise control of the large time behavior of solutions.
\end{abstract}

\bigskip 

\noindent {\small \textsf{AMS Classification 2010}: 35K65, 35K55, 35B40, 35A02, 35D30, 35D40} 

\noindent {\small \textsf{Keywords}: Trudinger's Equation, Doubly nonlinear equations, Uniqueness, Asymptotics}

\bigskip

\section{Introduction}
 Among the so-called doubly non-linear evolutionary equations, Trudinger's equation 
 \begin{equation}
\label{eq:maineq}
\frac{\dd\,(|v|^{p-2}v)}{\dd t}\,=\, \div (|\nabla v|^{p-2}\nabla v), \quad 1<p<\infty,     
\end{equation}
is distinguished. We shall study its solutions in $\Omega_T = \Omega \times (0,T),$ where $\Omega$ is a domain in $\R^n.$ The equation was originally considered by Trudinger in \cite{Tru}, where a Harnack inequality was studied for a wider class of evolutionary equations. He pointed out that no ``intrinsic scaling'' is needed for (\ref{eq:maineq}).
The equation has two special features:  it is \emph{homogeneous} and  it is \emph{not translation invariant}, except for $p=2$ when it reduces to the Heat Equation. The first property is, of course, an advantage.

While some progress has been made regarding continuity and regularity properties, the question about \emph{uniqueness} for the Dirichlet boundary value problem seems to be unsettled (under natural assumptions). Sign-changing solutions certainly come with many challenges.



We shall prove a comparison principle in Theorem \ref{thm:comp} for positive weak supersolutions/subsolutions  belonging, by definition, to the Sobolev space $L^p(0,T;W^{1,p}(\Omega))$. One of the functions is required to be bounded away from zero. Nevertheless, we can allow that one of the functions has zero lateral boundary values (Corollary \ref{classic}), which is relevant for a related  eigenvalue problem. Furthermore,  for Perron's method, a proper comparison principle is a \emph{sine qua non}. Our result also implies that for $p\geq 2$, all non-negative continuous weak solutions are viscosity solutions in the sense of Crandall,  Evans, and Lions in \cite{useless}. This is Theorem \ref{visco}. 

The equation
has an interesting connection to extremals of Poincar\'e inequalities in the Sobolev space $W_0^{1,p}(\Omega)$. These are the minimizers of the Rayleigh quotient
\begin{equation}\label{rayleigh}
\lambda_{p} = \inf_{u\in W_0^{1,p}(\Omega)\setminus \{0\}}\frac{\displaystyle\int_\Omega |\nabla u|^p dx}{\displaystyle\int_\Omega |u|^p dx}.
\end{equation}
(We refer the curious reader to \cite{Lq90} and \cite{Peter}.)  Any extremal $u = u(x)$ solves the corresponding Euler-Lagrange Equation
\begin{equation} \label{Euler}
  \mathrm{div}\bigl(|\nabla u|^{p-2}\nabla u\bigr) + \lambda_p |u|^{p-2}u\,=\,0,
  \end{equation}
and
$$v(x,t)\,=\,e^{-\frac{\lambda_p}{p-1}t}u(x)$$
is a solution to Trudinger's Equation.
  In \cite{HLtrud}, the following result is proved: if $v$ is a weak solution of 
$$
\begin{cases}
\displaystyle\frac{\partial \left(|v|^{p-2}v\right)}{\partial t}=\Delta_pv\quad &\text{in}\;\Omega\times (0,\infty)\\
\hspace{.67in}v=g\quad &\text{on}\;\Omega\times\{0\},\\
\hspace{.67in}v=0 \quad &\text{on}\;\partial\Omega\times[0,\infty),
\end{cases}
$$
where $\Delta_p v\,=\,\mathrm{div}(|\nabla v|^{p-2}\nabla v)$,
then the limit 
$$
u=\lim_{t\rightarrow\infty}e^{+\frac{\lambda_p}{p-1}t}v(\cdot,t)
$$
exists in $L^p(\Omega)$ and $u$ is a solution of \eqref{Euler}, possibly identically zero. It is also proved that solutions with extremals as initial data are separable. See \cite{ManVes} for some other related results. We address this question anew  in Theorem \ref{thm:unique}, where we prove a uniqueness result in star-shaped domains. In Corollary \ref{cor:large}, we treat the large time behavior in $C^{1,\alpha}$ domains, not necessarily star-shaped.

\paragraph{Known related results.}
The existence of solutions has been addressed in for instance \cite{Luck}, \cite{Lions} (Paragraph 3.1, Chapter 4) and \cite{Rav}.

To the best of our knowledge, the comparison principles known so far are  limited; in \cite{Luck}, a comparison principle is proved under the extra assumption  
$$\displaystyle\frac{\partial \left(|v|^{p-2}v - |u|^{p-2}u\right)}{\partial t}\in L^1(\Omega_T), $$
where $v$ is a supersolution and $u$ a subsolution. In \cite{Iva}, a comparison principle is proved for a general class of doubly nonlinear equations. The method there is right; however, for the parameter values yielding Trudinger's Equation, we are unable to verify the validity of the proof in that paper.

In \cite{Tuomo}, a Harnack inequality is proved for strictly positive solutions (Theorem 2.1), and in a subsequent comment  it is stated that it is valid also for merely non-negative solutions. A similar result for positive solutions has also been obtained in \cite{GiVe}. 
Local regularity and regularity up to the boundary has been studied in \cite{PorVer}. The equation and its regularity have also been treated in \cite{TuomoHolder}, \cite{Tuomo2}, \cite{Silj} \cite{Sturm1}, \cite{Sturm2} and \cite{Ves}. See also \cite{BhaMar} for a viscosity approach to the equation.

\paragraph{Plan of the paper.}
\small 
In Section \ref{sec:prel}, we introduce some notation and define weak solutions.  In Section \ref{sec:comp}, we prove the comparison principle. In Section \ref{sec:star}, we establish uniqueness of solutions in star-shaped domains with zero lateral boundary condition. In Section \ref{sec:eigen}, we show that one can compare with extremals of the Poincar\'e inequality in $C^{1,\alpha}$ domains, and we use this to study the large time behavior.  Finally, in Section \ref{sec:visc}, we prove that weak solutions are also viscosity solutions when $p\geq 2$. In the Appendix, we give a proof of the fact that the maximum of a subsolution and a constant is again a subsolution. 
\normalsize

\section{Preliminaries}\label{sec:prel}
 We  use standard notation. If  $\Omega$ is a bounded and open set in $\R^n$, we use the notation
$$
\Omega_T = \Omega\times (0,T),
$$ 
and the parabolic boundary  of $\Omega_T$ is
$$\dd_p \Omega_T\,=\, \Omega \times \{0\}\, \cup \,\dd \Omega \times [0,T].$$
 We denote the time derivative of a function $v$ by $v_t$. The positive part of a real quantity $a$ is $a^+ = \max\{a,0\}$.  Let us define the weak supersolutions, subsolutions and solutions of the equation
$$
\left(|v|^{p-2}v\right)_t\,=\,\lap_p v,\quad 1<p<\infty.
$$
Notice that although the functions are non-negative, we shall often write $|v|^{p-2}v$ in place of $v^{p-1},$ the reason being that many auxiliary identities are valid also for sign-changing solutions. 
\begin{de}
  We say that $v\in
  L_{\textup{loc}}^p(0,T;W_{\textup{loc}}^{1,p}(\Omega))$ is a \emph{weak supersolution}
 in $\Omega_T$  if
 \begin{equation}
\label{eq:supersol}
\iint_{\Omega_T}\left(- |v|^{p-2}v\,\phi_t + |\nabla v|^{p-2}\nabla v\cdot \nabla \phi\right) \, dx dt\,\geq\,0
\end{equation}
 holds for any $\phi\in C_0^\infty(\Omega_T),\,\,\phi \geq 0$. Similarly, $u\in
 L_{\textup{loc}}^p(0,T;W_{\textup{loc}}^{1,p}(\Omega))$ is a \emph{weak subsolution} if
 \begin{equation}
   \label{eq:subsol}
 \iint_{\Omega_T}\left(- |u|^{p-2}u\,\phi_t + |\nabla u|^{p-2}\nabla u\cdot \nabla \phi\right) \, dx dt\,\leq\,0.
\end{equation}   
 A function is a \emph{weak solution} if it is both a weak  super- and  subsolution.
\end{de}

\medskip 
By regularity theory, the weak solutions are locally H\"older continuous, see \cite{Tuomo} and \cite{Ves}. It is likely that the weak super- and subsolutions are semicontinuous (upon a change in a set of Lebesgue measure zero),  but we do not know of any reference. For technical reasons, we shall assume that weak \emph{sub}solutions are continuous, in the range $1 < p < 2$, when it is not sure that $\mathrm{ess\,inf} {u} > 0.$
An expedient tool is the weak Harnack inequality in \cite{Tuomo} for weak supersolutions $v\geq 0$ in $\Omega_T$. It implies that if
$$\underset{B\times (t_1,t_2)}{{\mathrm{ess\,inf}}} v \,=\,0$$
  over a strict subdomain, then $v\equiv0$ in $\Omega \times (0,t_1]$.

\section{The comparison principle}\label{sec:comp}

The uniqueness of sufficiently \emph{smooth} solutions that coincide on the parabolic boundary is evident. In particular the time derivative is crucial. Indeed, for two such solutions $u_1$ and $u_2$ in $C(\overline{\Omega}_T)$ with the same boundary and initial values on $\dd_p\Omega_T$, we \emph{formally} use the test function $\phi = H_{\delta}((u_2-u_1)^+),$ where  
 \begin{equation}\label{heavi}
H_\delta(s):=\begin{cases} 1, & s\geq\delta\\
\frac{s}{\delta}, & 0<s<\delta\\
0, & s\leq 0,
\end{cases}
\end{equation}
approximates the Heaviside function, to obtain
\begin{align*}
  &\int_{t_1}^{t_2}\!\!\int_{\Omega}\dd_t\bigl(|u_2|^{p-2}u_2-|u_1|^{p-2}u_1\bigr)\,H_{\delta}((u_2-u_1)^+) \,dxdt\\
  = -\frac{1}{\delta} & \int_{t_1}^{t_2}\!\!\int_{\Omega\cap \{0<u_2-u_1<\delta\}}\langle |\nabla u_2|^{p-2}\nabla u_2 - 
  |\nabla u_1|^{p-2}\nabla u_1,\nabla (u_2 - u_1)^+ \rangle dxdt\,\leq\,0.
\end{align*}
As $\delta \to 0$ it follows that
$$  \int_{t_1}^{t_2}\!\!\int_{\Omega}\dd_t\bigl([|u_2|^{p-2}u_2-|u_1|^{p-2}u_1]^+\bigr)\,dxdt\,\leq\,0.$$
Integrating we see that
$$\int_{\Omega}\,[|u_2|^{p-2}u_2-|u_1|^{p-2}u_1]^+\,dx \Big\vert_{t_2}  \,\leq\, \int_{\Omega}\,[|u_2|^{p-2}u_2-|u_1|^{p-2}u_1]^+\,dx \Big\vert_{t_1}.$$
The last integral becomes zero at $t_1 =0.$
We conclude that $u_2\leq u_1$ and switching the functions we get the desired uniqueness $u_2 =u_1$. This simple proof was purely formal. It is not clear how to rescue this reasoning without access to the time derivatives. Therefore we must pay careful  attention to the proper regularizations\footnote{A similar remark can be made for ``proofs'' of the inequality
  $$\frac{d}{dt}\int_\Omega\!|\nabla u(x,t)|^pdx \,\leq\,0,\qquad u \in L^p(0,T,W^{1,p}_0(\Omega)).$$}.

We have extracted some parts of our proof below of the Comparison Principle from \cite{Iva}. Unfortunately, sign-changing solutions are not susceptible of our treatment. In the next Theorem, the majorant can become infinite, if the parameter $\beta = 1$.

\begin{thm}   \label{thm:comp}  Let  $v\geq 0$ be  a weak supersolution and $u\geq 0$  a weak subsolution in $\Omega_T=\Omega\times (0,T)$, satisfying
\begin{equation}\label{limlim}
\liminf_{(y,s)\to (x,t)} v(y,s)\geq \limsup_{(y,s)\to (x,t)} u(y,s)\quad\text{when}\quad (x,t) \in \partial \Omega \times (0,T) 
\end{equation}
on the lateral boundary.
Then the inequality 
\begin{equation}
\label{eq:intineq}
\int_\Omega \left(|u|^{p-2}u-|\beta v|^{p-2}\beta v\right)^+ dx\, \Big|_{t=t_2}\,\leq\, \int_\Omega \left(|u|^{p-2}u-|\beta v|^{p-2}\beta v\right)^+ dx\, \Big|_{t=t_1}
\end{equation}
holds  for each constant $\beta > 1$ and for a.e. every time $t_1$ and $t_2$, 
  $0<t_1<t_2<T$,  under the following assumptions:
\begin{itemize}
\item[] In the case $p>2$,  $$\underset{\Omega_T}{\mathrm{ess\,inf}}{v}\,>\,0\quad\text{or}\quad\underset{\Omega_T}{\mathrm{ess\,inf}}{u}\,>\,0.$$
\item[] In the case $1 < p < 2$,$$\underset{\Omega_T}{\mathrm{ess\,inf}}{u}\,>\,0, $$
or $u$ is continuous and $\underset{\Omega_T}{\mathrm{ess\,inf}}{v}\,>\,0.$
\end{itemize}
\end{thm}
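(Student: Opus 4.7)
The plan is to turn the formal argument sketched in the introduction into a rigorous proof, using the test function $H_\delta\bigl((u-\tilde v)^+\bigr)$ with $\tilde v:=\beta v$. By the homogeneity of Trudinger's equation, $\tilde v$ is again a weak supersolution. The factor $\beta>1$ is essential on the lateral boundary: combined with $\liminf v\ge\limsup u$ it yields
$$\limsup_{(y,s)\to(x,t)}(u-\tilde v)(y,s)\le \limsup u-\beta\liminf v\le (1-\beta)\limsup u\le 0$$
at every $(x,t)\in\partial\Omega\times(0,T)$, pushing the support of $(u-\tilde v)^+$ away from the lateral boundary (possibly after a harmless spatial cutoff when both $u$ and $v$ vanish there). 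The target is to show that $t\mapsto\int_\Omega(u^{p-1}-\tilde v^{p-1})^+\,dx$ is non-increasing.

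The main obstacle is the absence of a classical time derivative for weak solutions. I would remove it by Steklov averaging: writing $f_h(x,t)=\tfrac{1}{h}\int_t^{t+h}f(x,s)\,ds$, the weak formulations translate into pointwise-in-$t$ inequalities
\begin{align*}
\int_\Omega \partial_t (u^{p-1})_h\,\psi\,dx+\int_\Omega(|\nabla u|^{p-2}\nabla u)_h\cdot\nabla\psi\,dx&\le 0,\\
\int_\Omega \partial_t (\tilde v^{p-1})_h\,\psi\,dx+\int_\Omega(|\nabla \tilde v|^{p-2}\nabla \tilde v)_h\cdot\nabla\psi\,dx&\ge 0,
\end{align*}
valid for a.e.\ $t$ and every non-negative $\psi\in W_0^{1,p}(\Omega)$. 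Subtracting and taking $\psi=H_\delta\bigl((u-\tilde v)^+\bigr)$, the gradient contribution collapses, after $h\to 0$, to
$$\frac{1}{\delta}\int_{\{0<u-\tilde v<\delta\}}\bigl\langle|\nabla u|^{p-2}\nabla u-|\nabla\tilde v|^{p-2}\nabla\tilde v,\,\nabla(u-\tilde v)\bigr\rangle dx\ge 0$$
by the standard vector monotonicity of $\xi\mapsto|\xi|^{p-2}\xi$, forcing the surviving time-derivative term to be non-positive.

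Integrating in $t\in(t_1,t_2)$ and letting $h\to 0$ and then $\delta\to 0$, the identity of sets $\{u>\tilde v\}=\{u^{p-1}>\tilde v^{p-1}\}$ (valid because $u,\tilde v\ge 0$) makes $H_\delta\bigl((u-\tilde v)^+\bigr)$ converge to the indicator of $\{u^{p-1}>\tilde v^{p-1}\}$, and combined with the chain-rule identity $\partial_t(U-V)^+=\chi_{\{U>V\}}\partial_t(U-V)$ (with $U=u^{p-1}$, $V=\tilde v^{p-1}$) the surviving term becomes
$$\int_{t_1}^{t_2}\!\!\int_\Omega \partial_t(u^{p-1}-\tilde v^{p-1})^+\,dx\,dt\le 0,$$
which integrates to \eqref{eq:intineq}.

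The hardest step is this final limit passage, and the essential-infimum hypotheses are exactly what is needed to control the non-linearity $s\mapsto s^{p-1}$ near the origin. For $p>2$ its derivative vanishes at $0$, and without a uniform lower bound on either $u$ or $\tilde v$ the quantities $u-\tilde v$ and $u^{p-1}-\tilde v^{p-1}$ lose their mutual comparability on the shrinking set $\{0<u-\tilde v<\delta\}$. For $1<p<2$ the derivative is singular at $0$ and the opposite bound is needed to keep $s\mapsto s^{p-1}$ Lipschitz on the relevant range. The extra continuity requirement on $u$ in the sub-quadratic regime with only $\operatorname{ess\,inf}v>0$ is what allows one to identify the pointwise level sets of $u$ after the Steklov averaging.
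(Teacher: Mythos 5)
Your proposal reproduces, with Steklov averages added, the \emph{formal} argument from the introduction of the paper, and it founders on exactly the obstruction that makes that argument formal. The test function you choose, $H_\delta\bigl((u-\tilde v)^+\bigr)$, is a function of $u-\tilde v$, while the time derivative in the equation acts on $u^{p-1}-\tilde v^{p-1}$. After Steklov averaging, the time term is
$$\int_{t_1}^{t_2}\!\!\int_\Omega \partial_t\bigl([u^{p-1}]_h-[\tilde v^{p-1}]_h\bigr)\,H_\delta\bigl((u-\tilde v)^+\bigr)\,dx\,dt,$$
and this is \emph{not} an exact time derivative of any quantity: there is no primitive $G_\delta$ with $\partial_t G_\delta(\cdot)$ equal to this product, because $H_\delta(u-\tilde v)$ cannot be written as a function of the averaged quantity $[u^{p-1}-\tilde v^{p-1}]_h$ being differentiated. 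Hence the term does not telescope into boundary values at $t_1$ and $t_2$. Your appeal to $\partial_t(U-V)^+=\chi_{\{U>V\}}\partial_t(U-V)$ with $U=u^{p-1}$ presupposes that $u^{p-1}-\tilde v^{p-1}$ has a time derivative, which is precisely what weak solutions lack; and for fixed $h$ the sets $\{(u-\tilde v)_h>0\}$ and $\{[u^{p-1}]_h>[\tilde v^{p-1}]_h\}$ do not coincide, since Steklov averaging does not commute with $s\mapsto s^{p-1}$. A telltale sign of the gap is that your argument never actually \emph{uses} the essential-infimum hypotheses in any estimate — if the time term really telescoped and the gradient term really had a sign, the theorem would follow without them.

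The way out, which is the paper's route, is to apply $H_\delta$ to $[u^{p-1}-\tilde v^{p-1}]_h$ itself, so that the time term becomes the exact derivative $\partial_t G_\delta\bigl([u^{p-1}-\tilde v^{p-1}]_h\bigr)$ and integrates cleanly. The price is that the gradient term is then paired with $\nabla(u^{p-1}-\tilde v^{p-1})$ instead of $\nabla(u-\tilde v)$ and loses its sign. One must split
$$\nabla\bigl(u^{p-1}-\tilde v^{p-1}\bigr)=(p-1)u^{p-2}\bigl(\nabla u-\nabla\tilde v\bigr)+(p-1)\bigl(u^{p-2}-\tilde v^{p-2}\bigr)\nabla\tilde v,$$
discard the first (monotone, correctly signed) part, and show that the second part, divided by $\delta$, vanishes as $\delta\to0$. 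This is where the lower bound enters concretely: on the set $\{0<u^{p-1}-\tilde v^{p-1}<\delta\}$ the elementary inequality $(1+x)^{(p-2)/(p-1)}\le 1+\tfrac{p-2}{p-1}x$ together with $\tilde v\ge c$ gives $|u^{p-2}-\tilde v^{p-2}|\le \tfrac{|p-2|}{p-1}\,\delta/c$, killing the factor $1/\delta$. A further point your proposal skips is the admissibility of the test function: its gradient involves $\nabla[u^{p-1}]_h=(p-1)[u^{p-2}\nabla u]_h$, whose uniform $L^p$ bound requires $u$ bounded above for $p>2$ (subsolutions are locally bounded) and $u$ replaced by $\max\{u,\sigma\}$ for $p<2$ — this is where the case distinction and the continuity assumption on $u$ actually do their work.
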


\begin{proof} Let us first assume $v \geq c$ (and $u$ continuous in the case $1<p<2$). Let $\gamma > 0$. Then $u<v+\gamma$ close enough to $\partial \Omega \times (0,T)$. Since $v\geq c>0$ this implies that 
$$u<v+\gamma =v(1+\gamma/v)\leq v(1+\gamma/c).$$
Define
$$
\tilde v(x,t)\,=\, \left(1+\frac{\gamma}{c}\right) v(x,t),\quad \beta = 1+\frac{\gamma}{c}.
$$
We will prove (\ref{eq:intineq}) with $\beta v$ replaced by $\tilde v$. Adding up \eqref{eq:supersol} for $\tilde v$ and \eqref{eq:subsol} yields
\begin{equation}
\label{eq:firstineq}
\ont \left(|\tilde v|^{p-2}\tilde v-|u|^{p-2}u\right)\phi_t \, dx dt \leq \ont \left(|\nabla \tilde v|^{p-2}\nabla \tilde v-|\nabla u|^{p-2}\nabla u\right)\cdot \nabla \phi \, dx dt,
\end{equation}
for any non-negative  $\phi\in C_0^\infty(\Omega_T)$.

We need a regularization. The Steklov average of  a function $f(x,t)$ is 
$$
f_h(x,t):=\frac{1}{h}\int_{t-h}^t f(x,\tau)\, d\tau, \quad 0<t-h,\,t<T,\, h>0.
$$
See Lemma 3.2 in Chapter 3-(i) in \cite{Ben}.
We note that if
$$
\ont f(x,t)\phi_t(x,t)\, dx dt\,\leq \,0,\quad  \phi\in C_0^\infty(\Omega_T),
$$
then
$$
\ont f(x,t+\tau)\phi_t(x,t)\, dx dt\,\leq\, 0,\quad -h <\tau < 0,\, \phi\in C_0^\infty(\Omega\times(2h,T-h)).
$$
 Integrating  $\tau$ from $-h$ to $0$, we obtain
$$
\ont f_h(x,t) \phi_t(x,t)\, dx dt\,\leq\,0, \quad \phi\in C_0^\infty(\Omega\times(2h,T-h)).
$$
Taking Steklov averages in \eqref{eq:firstineq} we deduce
\begin{equation*}
\ont \left(|\tilde v|^{p-2}\tilde v-|u|^{p-2}u\right)_h\phi_t \, dx dt\, \leq \,\ont \left(|\nabla\tilde v|^{p-2}\nabla\tilde  v-|\nabla u|^{p-2}\nabla u\right)_h\cdot \nabla \phi \, dx dt,
\end{equation*}
for $\phi \geq 0$ belonging to $C_0^\infty(\Omega\times(2h,T-h))$. Since the Steklov average is differentiable with respect to $t$, we may integrate by parts to obtain 
\begin{equation}
\label{eq:3rdineq}
-\ont \dd_t \left(|\tilde v|^{p-2}\tilde v-|u|^{p-2}u\right)_h\phi \, dx dt \leq \ont \left(|\nabla\tilde  v|^{p-2}\nabla\tilde v-|\nabla u|^{p-2}\nabla u\right)_h\cdot \nabla \phi \, dx dt.
\end{equation}
Recall the function $H_{\delta}$ from equation (\ref{heavi}) and define the  function
$$
G_\delta(s)=\begin{cases} s-\frac{\delta}{2}, & s\geq\delta\\
\frac{s^2}{2\delta}, & 0<s<\delta\\
0, & s\leq 0.
\end{cases}
$$
Note that $G'_\delta(s)=H_\delta(s)$ and that $H_\delta(s)$ is an approximation of the Heaviside function $H(s)$. For $2h<t_1<t_2<T-h$, we also define the cut-off function 
$$
\eta_{\varepsilon}(t)=H_{\varepsilon}(t-t_1)-H_{\varepsilon}(t-t_2),
$$
where $\varepsilon > 0$ is small.

Near the boundary $\partial\Omega$ we have $|u|^{p-2}u<|\tilde v|^{p-2}\tilde v$, so that the Steklov average  $[|u|^{p-2}u-|\tilde v|^{p-2}\tilde v]_h < 0$  there. We now choose the  test function \footnote{By estimating $\nabla \phi$ in the same way as $B_h$ on pages \pageref{page:bh}-\pageref{page:bh2}, one can argue that $\phi$ is regular enough to be approximated by smooth functions.} 
$$
\phi(x,t)=\eta_\e(t)H_\delta\left(\left[|u|^{p-2}u-|\tilde v|^{p-2}\tilde v\right]_h\right)
$$
and observe that
$$
\dd_t \,G_\delta\!\left(\left[|u|^{p-2}u-|\tilde v|^{p-2}\tilde v\right]_h\right)=H_\delta\left(\left[|u|^{p-2}u-|\tilde v|^{p-2}\tilde v\right]_h\right)\dd_t\!\left(\left[|u|^{p-2}u-|\tilde v|^{p-2}\tilde v\right]_h\right).
$$
With this particular choice of $\phi$, inequality \eqref{eq:3rdineq} becomes
\begin{align}
\label{eq:5}
&\ont \eta_\e(t) \dd_t G_\delta\!\left(\left[|u|^{p-2}u-|\tilde v|^{p-2}\tilde v\right]_h\right)\, dx dt \\
&\leq \ont\eta_\e(t) \left(|\nabla \tilde v|^{p-2}\nabla \tilde v-|\nabla u|^{p-2}\nabla u\right)_h\cdot \nabla H_\delta\left(\left[|u|^{p-2}u-|\tilde v|^{p-2}\tilde v\right]_h\right) \, dx dt.\nonumber
\end{align}
It is straightforward to send $\e\to 0$. 

We now focus on the left-hand side of \eqref{eq:3rdineq}. As $\e\to 0$ it becomes
\begin{align}\nonumber
&\int_{t_1}^{t_2}\!\!\int_\Omega \dd_t G_\delta\left(\left[|u|^{p-2}u-|\tilde v|^{p-2}\tilde v\right]_h\right)\, dx dt =\\
&\nonumber 
\int_\Omega G_\delta\left([|u|^{p-2}u-|\tilde v|^{p-2}\tilde v]_h\right)\, dx \Big|_{t=t_2}-\int_\Omega G_\delta\left([|u|^{p-2}u-|\tilde v|^{p-2}\tilde v]_h\right)\, dx \Big|_{t=t_1}.
\end{align}
Notice that $G_{\delta}(s)\leq s$.
We may now let $h\to 0$ to obtain\footnote{The limit holds at the Lebesgue points of
  $$t\to \int_{\Omega}\!G_{\delta}(|u|^{p-2}-|\tilde v|^{p-2}\tilde v)dx.$$}
$$
\int_\Omega G_\delta\left(|u|^{p-2}u-|\tilde v|^{p-2}\tilde v\right)\, dx \Big|_{t=t_2}-\int_\Omega G_\delta\left(|u|^{p-2}u-|\tilde v|^{p-2}\tilde v\right)\, dx \Big|_{t=t_1},
$$
for a.e. $t_1$ and $t_2$.
As $h \to 0$, we shall justify that \eqref{eq:5} implies
\begin{align}
&
\int_\Omega G_\delta\left(|u|^{p-2}u-|\tilde v|^{p-2}v\right)\, dx \Big|_{t=t_2}-\int_\Omega G_\delta\left(|u|^{p-2}u-|\tilde v|^{p-2}v\right)\, dx \Big|_{t=t_1}\nonumber\\
& \leq\int_{t_1}^{t_2}\!\!\int_{\Omega} \left(|\nabla \tilde v|^{p-2}\nabla \tilde v-|\nabla u|^{p-2}\nabla u\right)\cdot \nabla  H_\delta\left(|u|^{p-2}u-|\tilde v|^{p-2}\tilde v\right) \, dx dt\label{eq:bla}\\
&=-\frac{1}{\delta}\displaystyle\iint_{\Omega_\delta}\left(|\nabla u|^{p-2}\nabla u-|\nabla \tilde v|^{p-2}\nabla \tilde v\right) \cdot \nabla \left(|u|^{p-2}u-|\tilde v|^{p-2}\tilde v\right) \, dx dt,\nonumber 
\end{align}
where
$$\Omega_\delta = \Omega\times (t_1,t_2)\,\cap\, \{0< |u|^{p-2}u-|\tilde v|^{p-2}\tilde v<\delta\}.$$
The right-hand member in \eqref{eq:5} is (after sending $\e\to 0$)
$$\iint_{\Omega_{\delta,h}} \overbrace{\left(|\nabla \tilde v|^{p-2}\nabla \tilde v-|\nabla u|^{p-2}\nabla u\right)_h}^{A_h(x,t)}\cdot \overbrace{\nabla H_\delta\left(\left[|u|^{p-2}u-|\tilde v|^{p-2}\tilde v\right]_h\right)}^{B_h(x,t)} \, dx dt,$$
where the integration is over the compact subset $\Omega_{\delta,h}$ defined by
$$\Omega_{\delta,h} = \Omega\times (t_1,t_2)\,\cap\, \{0< [|u|^{p-2}u]_h-[|\tilde v|^{p-2}\tilde v]_h<\delta\}.$$
For all small $h$, $\Omega_{\delta,h}\subset  K\times (t_1,t_2)$, for some compact subset $K \subset\subset\Omega$.
We have that if
\begin{align*}
 & A_h \to A_0 \quad\text{strongly in}\quad L^{p/(p-1)}_{loc}(\Omega_T),\\
    &B_h \quad \text{is bounded in} \quad L^p_{loc}(\Omega_T),
\end{align*}\label{page:bh}
then, upon extracting a subsequence, the integrals
converge as they should:
$$\iint_{\Omega_{\delta,h}}\!A_h(x,t)B_h(x,t)\,dxdt \to  \iint_{\Omega_{\delta}}\! A_0(x,t)B_0(x,t)\,dxdt.$$
The convergence of $A_h$ follows from the properties of the Steklov averages. Concerning
the factor $B_h$, which is not a Steklov average, we have that
$$B_h\,=\,\frac{1}{\delta}\nabla \left([|u|^{p-2}u]_h-[|\tilde v|^{p-2}\tilde v]_h\right)$$
in $\Omega_{\delta,h}$ and $B_h$ = 0 otherwise. We have two cases.

If $p > 2$, we may assume that $0\leq u\leq L$, by Corollary \ref{cor:app} in the Appendix. Using H\"{o}lder's inequality and that $[\tilde v ^{p-1}]_h\leq [u^{p-1}]_h$ we have
\begin{align*}
  \big|\left[\tilde v^{p-2}\nabla \tilde v\right]_h\big|\,&\leq\,\left[\tilde v^{p-1}\right]_h^{\frac{p-2}{p-1}}\left[|\nabla \tilde v|^{p-1}\right]_h^{\frac{1}{p-1}}\\&\leq \left[u^{p-1}\right]_h^{\frac{p-2}{p-1}}\left[|\nabla \tilde v|^{p-1}\right]_h^{\frac{1}{p-1}}
    \leq L^{\frac{p-2}{p-1}}\left[|\nabla \tilde v|^{p-1}\right]_h^{\frac{1}{p-1}}.
\end{align*}
Integration yields the bound
\[\begin{split}
\iint_{\Omega_{\delta,h}}\big|\left[\tilde v^{p-2}\nabla  v\right]_h\big|^p\,dxdt\,&\leq\, L^{\frac{p(p-2)}{p-1}}\iint_{\Omega_{\delta,h}}\!\left[|\nabla \tilde v|^{p-1}\right]_h^{\frac{p}{p-1}}\,dxdt,\\
&\leq C \iint_{K\times (t_1,t_2)}|\nabla \tilde v|^p\,dxdt,
\end{split}
\]
when $h$ is small. This bound is  uniform in $h$. A similar estimate holds for the part of $B_h$ that involves $u$. Thus the $L^p$ -norm of $B_h$ is uniformly bounded. This was the case $p >2$. 
  
  For $p < 2$ we proceed as follows. Since $\tilde v> c$ near the lateral boundary, we can replace $u$ by the subsolution $\max\{u,\sigma\}$, where $0<\sigma < c$. See Proposition \ref{prop:maxsubsol} in the Appendix. Then the factors $u^{p-2}$ and $\tilde v^{p-2}$ are bounded from above so that
  $$|[\tilde v^{p-2}\nabla \tilde v]_h|\,\leq\,c^{p-2}[|\nabla \tilde v|]_h,\quad |[u^{p-2}\nabla u]_h|\,\leq\,\sigma^{p-2}[|\nabla u|]_h.$$
  Thus \eqref{eq:5} implies \eqref{eq:bla}, by a standard procedure.
\label{page:bh2}

We recall that for fixed $\gamma>0$ (defining $\tilde v$), $\Omega_\delta$ is compactly contained in $\Omega\times (0,T)$. Using the identity
$$
 \nabla \left(|u|^{p-2}u-|\tilde v|^{p-2}\tilde v\right) =(p-1)|u|^{p-2}\left(\nabla u-\nabla \tilde v\right)+(p-1)\nabla \tilde v\left(|u|^{p-2}-|\tilde v|^{p-2}\right),
$$
and rewriting the right-hand side of \eqref{eq:bla}, we obtain
\begin{align}
&
\int_\Omega G_\delta\left(|u|^{p-2}u-|\tilde v|^{p-2}\tilde v\right)\, dx \Big|_{t=t_2}-\int_\Omega G_\delta\left(|u|^{p-2}u-|\tilde v|^{p-2}\tilde v\right)\, dx \Big|_{t=t_1}\nonumber \\
&\leq-\frac{(p-1)}{\delta}\displaystyle\iint_{\Omega_\delta}|u|^{p-2}\overbrace{\left(|\nabla u|^{p-2}\nabla u-|\nabla \tilde v|^{p-2}\nabla \tilde v\right)\cdot \left(\nabla u-\nabla \tilde v\right)}^{\geq\, 0} dx dt\nonumber \\
&-\frac{(p-1)}{\delta}\displaystyle\iint_{\Omega_\delta}\left(|u|^{p-2}-|\tilde v|^{p-2}\right)\left(|\nabla u|^{p-2}\nabla u-|\nabla \tilde v|^{p-2}\nabla \tilde v\right)\cdot \nabla \tilde v\,  dx dt\label{eq:split} \\
&\leq -\frac{(p-1)}{\delta}\displaystyle\iint_{\Omega_\delta}\left(|u|^{p-2}-|\tilde v|^{p-2}\right)\left(|\nabla u|^{p-2}\nabla u-|\nabla \tilde v|^{p-2}\nabla \tilde v\right)\cdot \nabla \tilde v\,  dx dt.\nonumber 
\end{align}
It is straightforward to see that for a.e. time $t_1$ and $t_2$ we get
$$
\lim_{\delta\to 0}\int_\Omega G_\delta\left(|u|^{p-2}u-|\tilde v|^{p-2}\tilde v\right)\, dx \Big|_{t=t_i}\nonumber =\int_\Omega \left(|u|^{p-2}u-|\tilde v|^{p-2}\tilde v\right)^+\, dx \Big|_{t=t_i}
$$
where  $i =1,2.$

It now remains to verify that  the right-hand side  tends to zero as $\delta\to 0$. The difficulty is that the available inequality $0 < u^{p-1}-\tilde v^{p-1} < \delta$, with a very small $\delta$ does not  necessarily imply that also the quantity $u^{p-2}-\tilde v^{p-2}$ is of the order  $O(\delta)$, if it so happens that  both $u$ and $\tilde v$ are very small. It is at this point that the assumption of a lower bound $c$ is crucial.

The  elementary inequality
$$(1+x)^{\frac{p-2}{p-1}}\,\leq\,1+\tfrac{p-2}{p-1}x,\qquad p >2,\,\,x>0,
$$
implies that
$$u^{p-2} = \left(\tilde v^{p-1}+[u^{p-1}-\tilde v^{p-1}]\right)^{\frac{p-2}{p-1}} < \left(\tilde v^{p-1}+\delta\right)^{\frac{p-2}{p-1}} \leq  \tilde v^{p-2} +\tfrac{p-2}{p-1}\frac{\delta}{\tilde v}.$$
Thus we have arrived at
$$
0<u^{p-2}-\tilde v^{p-2}\leq \tfrac{p-2}{p-1}\,\frac{\delta}{\tilde v}\leq \tfrac{p-2}{p-1}\,\frac{\delta}{c} =  O(\delta) , \quad p>2,
$$
since $\tilde v > v \geq c.$ In the case $1<p<2$, the above elementary inequality is reversed and a similar reasoning yields
$$
0<\tilde v^{p-2}- u^{p-2}\leq \tfrac{2-p}{p-1}\,\frac{\delta}{\tilde v}\leq \tfrac{2-p}{p-1}\,\frac{\delta}{c} =  O(\delta) , \quad p<2.
$$
Hence we can kill the denominator $\delta$ below: 
\begin{align*}
&\Big|\frac{(p-1)}{\delta}\displaystyle\iint_{\Omega_\delta}\left(|u|^{p-2}-|\tilde v|^{p-2}\right)\left(|\nabla u|^{p-2}\nabla u-|\nabla \tilde v|^{p-2}\nabla \tilde v\right)\cdot \nabla \tilde v\,  dx dt\Big| &\\
& \leq C\displaystyle\iint_{\Omega_\delta}\left(|\nabla u|^{p}+|\nabla \tilde v|^{p}\right)\,  dx dt,
\end{align*}
where we have also used Young's inequality for the terms involving $\nabla u$ and $\nabla \tilde v$. Since the integral
$$
\int_{t_1}^{t_2}\!\!\int_{\{|u|^{p-2}u-|\tilde v|^{p-2}\tilde v>0\}}\left( |\nabla u|^p+|\nabla \tilde v|^p\right)\, dx dt
$$
is convergent and $\mathrm{meas}(\Omega_{\delta}) \to 0$ it is clear that 
$$
\displaystyle\iint_{\Omega_\delta}\left(|\nabla u|^{p}+|\nabla \tilde v|^{p}\right)\,  dx dt\,\to\, 0,
$$
as $\delta\to 0$. Therefore, letting $\delta \to 0$ in \eqref{eq:split}, we arrive at
\begin{equation*}
\int_\Omega \left(|u|^{p-2}u-|\tilde v|^{p-2}\tilde v\right)^+ dx \Big|_{t=t_2}\leq \int_\Omega \left(|u|^{p-2}u-|\tilde v|^{p-2}\tilde v\right)^+ dx \Big|_{t=t_1}.
\end{equation*}
This is valid for a.e. $t_1$ and $t_2$. This concludes the case $v\geq c$.

Finally, we note that if we instead assume $u \geq c$ (but  not necessarily $v \geq c$),  then \eqref{limlim} implies that also $v$ is bounded away from zero near $\partial\Omega\times [t_1,t_2]$. By the weak Harnack inequality in \cite{Tuomo}, also $\mathrm{ess\,inf}v > 0,$ the infimum being taken over $\Omega \times [t_1,t_2].$
The same argument goes through again.
\end{proof}

\begin{rem}\label{sobonoll} The conclusion in (\ref{eq:intineq}) is still valid, if the assumption (\ref{limlim}) is replaced by the requirement
  $$(u-v)^+ \in L^p_{loc}(0,T;W^{1,p}_0(\Omega)).$$
  Now the boundary values are taken in Sobolev's sense.
\end{rem}

\medskip

\begin{cor}[Comparison Principle]\label{classic} If inequality (\ref{limlim}) is valid on the whole parabolic boundary $\dd_p \Omega$, then $v\geq u$ in $\Omega_T$.
\end{cor}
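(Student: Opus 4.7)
The plan is to bootstrap Theorem~\ref{thm:comp} by exploiting the extra information on the initial slice $\Omega \times \{0\}$ that is now available through (\ref{limlim}). Assume $v \geq c > 0$ in $\Omega_T$; the alternative hypothesis $u \geq c$ can be reduced to this one via the weak Harnack argument invoked at the very end of the proof of Theorem~\ref{thm:comp}. Fix $\beta > 1$ and set $\e := (\beta-1)c/2$. At each point $(x_0,0)$ with $x_0 \in \overline{\Omega}$, the hypothesis (\ref{limlim}) produces a neighborhood $U_{x_0}$ on which $v(y,s) > u(y,s) - \e$.

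By compactness of $\overline{\Omega} \times \{0\}$, finitely many such neighborhoods cover it, so there is $\tau > 0$ with $v > u - \e$ throughout the slab $\Omega \times (0,\tau)$. On that slab
$$\beta v \,=\, v + (\beta-1)v \,\geq\, (u-\e) + (\beta-1)c \,=\, u + \tfrac{(\beta-1)c}{2} \,>\, u,$$
and hence $\bigl(|u|^{p-2}u - |\beta v|^{p-2}\beta v\bigr)^+ \equiv 0$ there. Applying Theorem~\ref{thm:comp} with a Lebesgue point $t_1 \in (0,\tau)$ of the relevant integral, the right-hand side of (\ref{eq:intineq}) is zero, so
$$\int_\Omega \bigl(|u|^{p-2}u - |\beta v|^{p-2}\beta v\bigr)^+ dx \,\Big|_{t_2} = 0 \quad \text{for a.e.\ } t_2 \in (t_1,T).$$
Non-negativity of the integrand yields $u \leq \beta v$ a.e.\ in $\Omega \times (t_1,T)$, and since $t_1>0$ can be chosen arbitrarily small, a.e.\ in $\Omega_T$. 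Finally, letting $\beta \to 1^+$ gives $u \leq v$ a.e., which is the desired inequality once one passes to continuous representatives of the solutions.

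The only genuinely delicate point is the compactness step: the pointwise inequality $v > u - \e$ on $U_{x_0} \cap \Omega_T$ must be legitimate for functions only defined a.e. The paper evidently reads the $\liminf$ and $\limsup$ in (\ref{limlim}) along suitable (semi)continuous representatives of $u$ and $v$, and with that convention the finite subcover argument is entirely routine. Everything else is bookkeeping: converting the ``$u \geq c$'' alternative to ``$v \geq c$'', and passing the two successive limits $t_1 \to 0^+$ and $\beta \to 1^+$ in the energy inequality already delivered by Theorem~\ref{thm:comp}.
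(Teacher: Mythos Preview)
Your proof is correct and follows essentially the same route as the paper's. The paper's own proof is very terse---it simply asserts that as $t_1\to 0$ the right-hand side of (\ref{eq:intineq}) tends to zero, then lets $\beta\to 1^+$---while you have supplied the explicit compactness argument showing that the integrand $(|u|^{p-2}u-|\beta v|^{p-2}\beta v)^+$ actually vanishes identically on a thin initial slab $\Omega\times(0,\tau)$, which is precisely the content hidden in that one sentence.
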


\begin{proof} Let $\beta > 1$ be arbitrary. As $t_1 \to 0$, the right-hand side of inequality (\ref{eq:intineq}) approaches zero so that
  $$\int_{\Omega}\bigl(|u|^{p-2}u-|\beta v|^{p-2}\beta v \bigr)^+dx\Big\vert_{t_2}\,\leq 0$$
  for a.e. $t_2$. We conclude that
  $$
  \bigl(|u|^{p-2}u-|\beta v|^{p-2}\beta v \bigr)^+ \,=\,0.
  $$
  The result follows as $\beta \to 1$. 
  \end{proof}

\section{Comparison in star-shaped domains}\label{sec:star}

The restriction $\mathrm{ess\,inf}v > 0$ or $\mathrm{ess\,inf}u > 0$ is \emph{not} assumed in this section.
In  star-shaped  domains, we may prove uniqueness for non-negative solutions, provided that the lateral boundary values are zero. Convex domains are of this type. The initial values, say $g(x)$, have to be attained for a supersolution (or subsolution) $v$ at least in the sense that
\begin{equation}\label{initial}
  \lim_{t \to 0+}\int_{\Omega}\,|v(x,t)^{p-1}-g(x)^{p-1}|\,dx\,=\,0.
  \end{equation}
In the comparison principle below, at least the ``smaller function'' has zero lateral boundary values. (We aim at the eigenvalue problem (\ref{Euler}).) 

\begin{thm}\label{thm:unique} Suppose $\Omega$ is star-shaped
. Let  $v\geq 0$ be  a  weak supersolution with initial values $v(x,0) = g(x)$. Assume that $u\geq 0$ is a  weak subsolution with the same initial values $u(x,0) = g(x)$  and with zero lateral values:
  $$u \in L^p_{loc}(0,T;W^{1,p}_0(\Omega))$$
 In the range $1<p<2$ we assume, in addition, that $u$ is continuous. Then $v\,\geq\,u$ a.e. in $\Omega_T$.
\end{thm}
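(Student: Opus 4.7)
The plan is to exploit the scale invariance of Trudinger's equation, together with the star-shape of $\Omega$, to reduce to the Comparison Principle (Theorem \ref{thm:comp}). After a translation, assume $\Omega$ is star-shaped with respect to the origin. For each $\lambda > 1$ define the dilation
$$u_\lambda(x,t) := u(\lambda x, \lambda^p t)$$
on the shrunken cylinder $\lambda^{-1}\Omega \times (0, T/\lambda^p)$. A direct computation shows that the equation is invariant under this joint space-time dilation, so $u_\lambda$ is again a non-negative weak subsolution; continuity of $u$ in the case $1 < p < 2$ transfers to $u_\lambda$. Since $u(\cdot,t) \in W^{1,p}_0(\Omega)$, the dilation satisfies $u_\lambda(\cdot,t) \in W^{1,p}_0(\lambda^{-1}\Omega)$, and star-shapedness yields $\lambda^{-1}\bar\Omega \subset\subset \Omega$.

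The trivial case $g \equiv 0$ is handled separately (for instance by comparing $u$ with $\e\phi$, where $\phi$ is any strictly positive separable solution, and letting $\e \to 0$). So assume $g \not\equiv 0$. Then the weak Harnack inequality recorded in Section \ref{sec:prel} forces $\underset{\lambda^{-1}\bar\Omega \times [t_0, T']}{\mathrm{ess\,inf}}\,v > 0$ for every $0 < t_0 < T' < T/\lambda^p$. On the lateral face $\partial(\lambda^{-1}\Omega) \times [t_0, T']$, interior H\"older continuity of $v$ combined with this positive lower bound gives $v \geq c_\lambda > 0$, while $u_\lambda = 0$ there in trace sense. Hence the hypotheses of Theorem \ref{thm:comp} (using the continuity alternative when $1 < p < 2$) are met on $\lambda^{-1}\Omega \times (t_0, T')$, yielding for every $\beta > 1$ and a.e.\ $t_0 < t_1 < t_2 < T'$
$$
\int_{\lambda^{-1}\Omega} \bigl(u_\lambda^{p-1} - \beta^{p-1} v^{p-1}\bigr)^+\,dx\,\Big|_{t_2}\,\leq\,
\int_{\lambda^{-1}\Omega} \bigl(u_\lambda^{p-1} - \beta^{p-1} v^{p-1}\bigr)^+\,dx\,\Big|_{t_1}.
$$

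The conclusion follows by passing to the limits $t_1 \to 0^+$, then $\lambda \to 1^+$, then $\beta \to 1^+$. The initial-value condition (\ref{initial}) gives $v(\cdot, t_1)^{p-1} \to g^{p-1}$ and (after the change of variables $y=\lambda x$) $u_\lambda(\cdot, t_1)^{p-1} \to g(\lambda\,\cdot\,)^{p-1}$ in $L^1$; since $s \mapsto s^+$ is Lipschitz, the right-hand side converges to $\int_{\lambda^{-1}\Omega}(g(\lambda x)^{p-1} - \beta^{p-1} g(x)^{p-1})^+\,dx$ as $t_1 \to 0^+$. The $L^{p-1}$-continuity of dilations then gives $g(\lambda\,\cdot\,) \to g$ in $L^{p-1}(\Omega)$ as $\lambda \to 1^+$, so this integral tends to $\int_\Omega (1 - \beta^{p-1})^+\,g^{p-1}\,dx = 0$ (using $\beta^{p-1} > 1$). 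The left-hand side meanwhile tends to $\int_\Omega (u^{p-1} - \beta^{p-1} v^{p-1})^+|_{t_2}\,dx$ by continuity of $u$ and $v$; hence $u \leq \beta v$ a.e.\ for every $\beta > 1$, and $\beta \to 1^+$ yields $u \leq v$ a.e.

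The main obstacle is coordinating the three successive limits so that the right-hand side genuinely vanishes. The most delicate ingredient is the $L^{p-1}$-continuity of the dilation $g \mapsto g(\lambda\,\cdot\,)$, which must be combined with the weak initial-data convergence (\ref{initial}) rather than with any pointwise control of $g$. A secondary technical point is that in the range $1 < p < 2$ the lower bound lives on $v$, so the continuity hypothesis on $u$ is invoked precisely to activate the second alternative in Theorem \ref{thm:comp}.
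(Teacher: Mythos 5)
Your argument is correct and is essentially the paper's own proof in mirror image: the paper dilates the supersolution outward, setting $w_\alpha(x,t)=v(\alpha x,\alpha^p t)$ with $\alpha<1$ and comparing with $u$ on $\Omega$ itself, whereas you dilate the subsolution inward and compare on $\lambda^{-1}\Omega$; in both versions the weak Harnack inequality supplies the positive lower bound required by Theorem \ref{thm:comp}, the zero lateral data are used in the Sobolev sense of Remark \ref{sobonoll}, and the limits $t_1\to 0$, dilation parameter $\to 1$, $\beta\to 1$ are taken in the same order with the same use of (\ref{initial}) and of $L^1$-continuity of dilations applied to $g^{p-1}$. The only small caveat is that your appeal to ``continuity of $u$ and $v$'' for the left-hand side is not literally available when $p\geq 2$ (neither a subsolution nor a supersolution need be continuous), but the paper glosses over the identical point, and it is repaired by space-time $L^p_{\mathrm{loc}}$-continuity of dilations together with Fatou's lemma at a.e.\ time $t_2$.
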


\medskip

\begin{rem} If the lateral boundary values are taken in the classical sense

  $$\lim_{(y,\tau) \to (x,t)}u(y,\tau)\,=\,0\quad \text{when} \quad (x,t) \in \dd\Omega \times (0,T),$$
the conclusion is still valid.
\end{rem}

\medskip

\begin{proof}  We may exclude the case $v \equiv 0$, since then also $g  \equiv 0$. Thus, $v > 0$ in $\Omega_T$ by the weak Harnack inequality (Theorem 7.1 in \cite{Tuomo}). We may assume that $\Omega$ is star-shaped with respect to the origin.  Consider the weak supersolution
  $$w_{\alpha}(x,t) \,=\,v(\alpha x,\alpha^p t),\qquad 0 < \alpha < 1.$$
  Let $0<t_1<t_2<T.$ Now, again by the same weak Harnack inequality,
  $$w_{\alpha}(x,t)\,\geq\,c_{\alpha} \,>\,0\quad\text{when}\quad (x,t)\in \overline \Omega\times[t_1,t_2],$$
  where $c_{\alpha}$ is a positive constant. (For the original $v$ this inequality may fail.)  Hence, the comparison principle in Theorem \ref{thm:comp}
  applies for $w_{\alpha}$ and $u$ in  the subdomain $\Omega \times [t_1,t_2]$ so that
  $$\int_{\Omega}\,[u^{p-1}-  (\beta w_{\alpha})^{p-1}]^+\,dx\, \Big\vert_{t_2}\,\leq\, \int_{\Omega}\,[u^{p-1}- (\beta w_{\alpha})^{p-1}]^+\,dx\,\Big\vert_{t_1},$$
  when $\beta > 1.$
  Now we can send $t_1$ to $0$.  It follows that
  $$\int_{\Omega}\,[u^{p-1}-  (\beta w_{\alpha}^{p-1})]^+\,dx \,\Big\vert_{t_2}\,\leq\, \int_{\Omega}[g(x)^{p-1}- (\beta g(\alpha x))^{p-1}]^+\,dx.$$
   We can now safely send $\alpha \to 1-$ and $\beta \to 1+$, whence 
  $$\int_{\Omega}\,[u(x,t_2)^{p-1}-  v(x,t_2)^{p-1}]^+\,dx\, \leq\,0.$$
  We conclude that  $u(x,t_2)\leq v(x,t_2)$ when $x \in \Omega.$ It follows that $u\leq v$ a.e. in $\Omega_T$.
  \end{proof}
  
In the next corollary we again assume that the lateral boundary values are taken in the sense that
$$v \in L^p_{loc}(0,T;W^{1,p}_0(\Omega)$$ 
and the initial values are as in (\ref{initial}).

\begin{cor}\label{cor:star} Assume $g\in L^p(\Omega)$ and $\Omega$ star-shaped. Then the solution $v$ of  the following problem
$$
\begin{cases}
\dd_t\left(|v|^{p-2}v\right)=\lap_p v, & (x,t)\in \Omega\times (0,T),\\
v(x,0)=g(x)\geq 0, & x\in \Omega, \\
v(x,t)= 0, &  (x,t)\in \dd\Omega\times (0,T),
\end{cases}
$$
is unique.
\end{cor}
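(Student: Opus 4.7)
\medskip

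\noindent\emph{Proof proposal.} The plan is to reduce this to Theorem \ref{thm:unique} by a symmetry argument. Suppose $v_1$ and $v_2$ are two non-negative solutions of the stated initial-boundary value problem. Each $v_i$ is simultaneously a weak supersolution and a weak subsolution; each lies in $L^p_{\mathrm{loc}}(0,T;W^{1,p}_0(\Omega))$, so that the lateral condition is realized in the Sobolev sense; and each attains the initial datum $g$ in the $L^1$-sense of \eqref{initial}. By the regularity theory cited in Section \ref{sec:prel}, $v_1$ and $v_2$ are locally H\"older continuous, so the continuity hypothesis imposed in Theorem \ref{thm:unique} for the range $1<p<2$ holds automatically.

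I would then apply Theorem \ref{thm:unique} twice. First, regard $v_1$ as the supersolution and $v_2$ as the subsolution with zero lateral Sobolev boundary values; since both attain the initial datum $g$, the theorem yields $v_2 \leq v_1$ a.e.\ in $\Omega_T$. Swapping the roles of $v_1$ and $v_2$ gives the reverse inequality $v_1 \leq v_2$ a.e., whence $v_1 = v_2$ a.e., which is the claimed uniqueness.

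Once Theorem \ref{thm:unique} is in hand, this is essentially a one-line deduction, so I anticipate no substantial new obstacle. The only point that deserves a brief verification is the automatic continuity of the ``subsolution'' in the subquadratic range $1<p<2$, which is needed to legitimately invoke Theorem \ref{thm:unique}; here this is free because every weak solution of Trudinger's equation is locally H\"older continuous. Were the competitors merely sub- and supersolutions rather than actual solutions, this step would be less transparent and would require a separate regularity argument.
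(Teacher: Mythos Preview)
Your proposal is correct and matches the paper's own treatment: the paper states Corollary~\ref{cor:star} without proof, regarding it as an immediate consequence of Theorem~\ref{thm:unique} applied with the roles of the two solutions interchanged. Your observation that the continuity hypothesis for $1<p<2$ is automatic for weak solutions (by local H\"older regularity) is exactly the point that makes the deduction go through without further work.
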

In particular, we obtain that solutions with extremals of \eqref{rayleigh} as initial data are separable.
\begin{cor} Assume that $\Omega$ is a star-shaped domain. Suppose $v$ is a weak solution of
$$
\begin{cases}
\dd_t(|v|^{p-2}v)=\lap_p v,& \Omega\times (0,T),\\
v(x,0)=u_p(x), & x\in  \Omega,\\
v(x,t)=0, & (x,t)\in \dd\Omega\times (0,T),
\end{cases}
$$
where $u_p$ is an extremal of \eqref{rayleigh}. Then 
$$
v= e^{-\frac{\lambda_p t}{p-1}}u_p.
$$
\end{cor}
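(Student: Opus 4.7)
The plan is to exhibit the separable function
$$V(x,t)\,=\,e^{-\frac{\lambda_p}{p-1}t}u_p(x)$$
as one weak solution of the stated initial-boundary value problem, and then invoke Corollary \ref{cor:star} to force $v = V$.

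First I would check that $V$ is a weak solution in the sense of the Definition in Section \ref{sec:prel}. For any $\phi\in C_0^{\infty}(\Omega_T)$ one has
$$\iint_{\Omega_T}\!\left(-|V|^{p-2}V\,\phi_t+|\nabla V|^{p-2}\nabla V\cdot\nabla\phi\right)dxdt = \int_0^T e^{-\lambda_p t}\!\!\int_{\Omega}\!\left(-u_p^{p-1}\phi_t + |\nabla u_p|^{p-2}\nabla u_p\cdot\nabla\phi\right)dxdt.$$
Integrating by parts in $t$ (the coefficient $e^{-\lambda_p t}$ is smooth in $t$ and $\phi$ is compactly supported in time) transfers the $t$-derivative onto $e^{-\lambda_p t}$, producing $-\lambda_p e^{-\lambda_p t}u_p^{p-1}\phi$. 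For the spatial term, since $u_p\in W_0^{1,p}(\Omega)$ satisfies the Euler-Lagrange equation \eqref{Euler} weakly, one has
$$\int_{\Omega}|\nabla u_p|^{p-2}\nabla u_p\cdot\nabla\phi(\cdot,t)\,dx \,=\, \lambda_p\int_{\Omega}u_p^{p-1}\phi(\cdot,t)\,dx$$
for a.e.\ $t$. Adding the two contributions yields $0$, so $V$ is a weak solution (in fact a classical one in $t$).

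Next I would verify the side conditions required by Corollary \ref{cor:star}. Since $u_p\in W_0^{1,p}(\Omega)$, the function $V(\cdot,t)=e^{-\lambda_p t/(p-1)}u_p$ lies in $W_0^{1,p}(\Omega)$ for every $t$, hence $V\in L^p_{\mathrm{loc}}(0,T;W_0^{1,p}(\Omega))$; the lateral boundary values vanish in Sobolev's sense. For the initial datum, dominated convergence gives
$$\int_{\Omega}\bigl|V(x,t)^{p-1}-u_p(x)^{p-1}\bigr|\,dx \,=\, \bigl(1-e^{-\lambda_p t}\bigr)\!\int_{\Omega}u_p^{p-1}\,dx \,\longrightarrow\, 0$$
as $t\to 0+$, so the convergence \eqref{initial} holds (note $u_p^{p-1}\in L^1(\Omega)$ by H\"older's inequality since $u_p\in L^p(\Omega)$ and $\Omega$ is bounded).

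Finally, both $v$ and $V$ are non-negative weak solutions of the same initial-boundary value problem in the star-shaped domain $\Omega$, with the common initial datum $g=u_p\in L^p(\Omega)$ and zero lateral boundary values. By Corollary \ref{cor:star}, the solution is unique, whence $v=V=e^{-\lambda_p t/(p-1)}u_p$ a.e.\ in $\Omega_T$. The only slightly delicate point in this plan is the continuity hypothesis on the ``subsolution'' needed in the range $1<p<2$; but uniqueness in Corollary \ref{cor:star} can be applied symmetrically (comparing $v$ with $V$ in both directions), and the separable solution $V$ is smooth in $t$ and H\"older continuous in $x$ (by the regularity of extremals of \eqref{rayleigh}), so this causes no obstruction.
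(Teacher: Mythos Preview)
Your approach is correct and is exactly the one the paper intends: the corollary is stated without proof precisely because it follows immediately from Corollary~\ref{cor:star} once one observes that $V(x,t)=e^{-\lambda_p t/(p-1)}u_p(x)$ is a weak solution with the same data, which is noted already in the Introduction.

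One small remark on your handling of the continuity hypothesis for $1<p<2$. To apply Theorem~\ref{thm:unique} (via Corollary~\ref{cor:star}) in both directions you need the \emph{subsolution} to be continuous in each direction; in the direction where $v$ plays the role of the subsolution and $V$ the supersolution, it is the continuity of $v$ that is required, not of $V$. This is fine, but for a different reason than the one you give: as recorded in Section~\ref{sec:prel}, weak \emph{solutions} are locally H\"older continuous by regularity theory, so $v$ itself is continuous. With that observation your argument closes cleanly.
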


\section{
  Extremals and large time behavior}\label{sec:eigen} 
When the initial data are comparable to an extremal of \eqref{rayleigh}, we are able to extend the  comparison principle to  more general domains than star-shaped ones. If the boundary is $C^{1,\alpha}$-regular, it is known that the extremals are $C^{1,\alpha}$ up to the boundary and that the estimates are uniform if the $C^{1,\alpha}$-norm of the boundary is uniformly controlled. See Theorem 1 in \cite{Lieb86}. As a consequence we have the following lemma for the extremals of \eqref{rayleigh} under   an exhaustion 
$$\Omega\,=\,\bigcup_{j=1}^\infty \Omega_j,\qquad \Omega_j\subset \subset \Omega_{j+1}$$
of the domain.

\begin{lem}\label{lem:cj} Suppose $\Omega$ is a $C^{1,\alpha}$ domain. Then there are a sequence of  uniformly $C^{1,\alpha}$-regular sets $\Omega_j\subset\subset \Omega $ exhausting $\Omega$ and a sequence of numbers $c_j\to 1$ such that 
$$
c_ju_p^j\leq u_p\quad  \text{ in }\quad \Omega_j.
$$
Here $u_p^j$ and $u_p$ are the extremals of \eqref{rayleigh} in $\Omega_j$ and $\Omega$, respectively.
\end{lem}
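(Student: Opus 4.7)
The plan is to construct $\Omega_j$ as interior parallel sets of $\Omega$, appeal to Lieberman's Theorem~1 in \cite{Lieb86} for uniform $C^{1,\alpha}$ regularity up to the boundary, and then compare $u_p$ with $u_p^j$ by splitting $\Omega_j$ into an interior region (where the two are uniformly close by Arzel\`a--Ascoli) and a boundary layer (where Hopf-type expansions take over).

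First I would set $\rho(x)=\dist(x,\partial\Omega)$. Since $\partial\Omega$ is $C^{1,\alpha}$, the function $\rho$ is $C^{1,\alpha}$ in an inner tubular neighborhood, so the nested family
$$
\Omega_j\,:=\,\{x\in\Omega:\rho(x)>1/j\}\qquad(j\text{ large})
$$
exhausts $\Omega$, and each $\Omega_j$ is a $C^{1,\alpha}$ domain with a $C^{1,\alpha}$-bound independent of $j$. On each $\Omega_j$ I take the positive Poincar\'e extremal $u_p^j$, normalized by $\|u_p^j\|_{L^p(\Omega_j)}=\|u_p\|_{L^p(\Omega)}$. Standard domain monotonicity yields $\lambda_p(\Omega_j)\searrow\lambda_p$ and, after extension by zero, $u_p^j\to u_p$ in $L^p(\Omega)$, the limit being identified via uniqueness of the first eigenfunction. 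Lieberman's theorem applied uniformly in $j$ then gives
$$
\|u_p^j\|_{C^{1,\alpha}(\overline{\Omega_j})}\,\leq\,M,
$$
so Arzel\`a--Ascoli upgrades the convergence to $u_p^j\to u_p$ in $C^1(\overline K)$ for every $K\Subset\Omega$.

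I would then put
$$
c_j\,:=\,\inf_{\Omega_j}\frac{u_p}{u_p^j},
$$
which is attained at some interior point $x_j$ since $u_p/u_p^j\to+\infty$ at $\partial\Omega_j$. By construction $c_ju_p^j\leq u_p$ in $\Omega_j$, and testing the ratio at any fixed $y\in\Omega$ shows $\limsup c_j\leq 1$. Along a subsequence with $x_j\to x_*\in\overline{\Omega}$, the case $x_*\in\Omega$ is immediate from the interior $C^1$-convergence. For $x_*\in\partial\Omega$ I would use the Hopf-type expansions
$$
u_p^j(x_j)\,=\,|\nabla u_p^j(\xi_j^*)|\,\tau_j+O(\tau_j^{1+\alpha}),\qquad u_p(x_j)\,=\,|\nabla u_p(\xi_j)|\,(\tau_j+\tfrac{1}{j})+O((\tau_j+\tfrac{1}{j})^{1+\alpha}),
$$
with $\xi_j^*\in\partial\Omega_j$ the nearest boundary point, $\xi_j=\xi_j^*+\tfrac{1}{j}\nu(\xi_j^*)\in\partial\Omega$, and $\tau_j=\dist(x_j,\partial\Omega_j)$. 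Combined with the convergence $|\nabla u_p^j(\xi_j^*)|,\,|\nabla u_p(\xi_j)|\to|\nabla u_p(x_*)|$ of boundary normal derivatives, these expansions give
$$
c_j\,=\,\frac{u_p(x_j)}{u_p^j(x_j)}\,\geq\,(1+o(1))\Bigl(1+\tfrac{1/j}{\tau_j}\Bigr)\,\geq\,1+o(1).
$$

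The principal obstacle is that last ingredient: the convergence of normal derivatives on the moving boundary $\partial\Omega_j$ is not a consequence of interior $C^1$-convergence alone. I would handle it by passing to local $C^{1,\alpha}$ charts that simultaneously flatten $\partial\Omega$ and the nearby $\partial\Omega_j$ (the latter being a small $C^{1,\alpha}$-perturbation of the former), so that the $u_p^j$ form an equi-$C^{1,\alpha}$ family on a common half-ball thanks to Lieberman's \emph{uniform} estimates; Arzel\`a--Ascoli in this straightened picture then forces the boundary gradients to converge, which is what was needed.
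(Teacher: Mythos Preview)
Your argument is correct and follows essentially the same route as the paper: both rely on Lieberman's uniform $C^{1,\alpha}$ boundary estimates, a local flattening of $\partial\Omega$ and $\partial\Omega_j$, Arzel\`a--Ascoli to obtain convergence of the boundary gradients $\nabla u_p^j\to\nabla u_p$, and then a first-order Taylor (Hopf) expansion along the normal direction to control the ratio. The only cosmetic difference is that the paper packages the boundary analysis as a contradiction (assuming $u_p^j(x_j)/u_p(x_j)\geq 1+\varepsilon$ along a sequence), whereas you argue directly by defining $c_j=\inf_{\Omega_j}u_p/u_p^j$ and showing $\liminf c_j\geq 1$.
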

\begin{proof} We argue by contradiction. Take $\e>0$. If the result is not true, then for any choice of $\Omega_j$ and for each $j>0$, there is a point $x_j\in \Omega_j$ such that
\begin{equation}\label{erik}
\frac{u_p^j(x_j)}{u_p(x_j)}\geq 1+\e.
\end{equation}
It is clear that $x_j$ must converge to a point in $\dd \Omega$ since the quotient is uniformly convergent to $1$ in every compact subdomain of $\Omega$.

Suppose for simplicity that $x_j\to 0\in \dd\Omega$
and that near the origin the boundary of $\Omega$ is the hyperplane $x_1=0$\footnote{The geometry can be transformed into this upon making a local coordinate transformation with a $C^{1,\alpha}$-function.}. We choose the boundary of $\Omega_j$ to be the hyperplane $x_1=1/j$ . By Theorem 1 in \cite{Lieb86}, the functions $u_p^j$ are uniformly $C^{1,\alpha}(U\cap \{x_1\geq 1/j\})$ for some neighborhood $U$ of the origin. By inspecting  the functions
$$
v^j(x)=u_p^j(x+e_1/j), \quad x\in U\cap \{x_1\geq 0\},
$$
it is easy to conclude from Ascoli's Theorem that up to a subsequence, $\nabla v^j(0)=\nabla u_p^j(e_1/j)$ converges to $\nabla u_p(0)$ when $j \to \infty.$

We may also assume that $x_j = e_1/j + z_j e_1$ where $z_j>0$ and $z_j\to 0$ (otherwise we may just translate the origin along the hyperplane). We note that $\nabla u_p (0)$ and $\nabla u_p^j(x_j)$ both point in the $e_1$ direction. Taylor expansion gives
\begin{equation}\label{peter}
u_p(x_j)=(1/j+z_j)|\nabla u_p(0)|+\O(x_j^{1+\alpha}).
\end{equation}
Since $\nabla u_p^j(e_1/j)$ converges to $\nabla u_p(0)$ we may write
$$
\nabla u_p^j(e_1/j)=|\nabla u_p(0)|e_1+\delta_je_1, \quad \delta_j\to 0.
$$
Again, using Taylor expansion
\begin{align*}
u_p^j(x_j)&=\nabla u_p^j(e_1/j)\cdot e_1 z_j+\O(z_j^{1+\alpha})\\&
=(|\nabla u_p(0)|e_1+\delta_j e_1)e_1z_j+\O(z_j^{1+\alpha})\\
&= |\nabla u_p(0)|z_j+\delta_j  z_j +\O(z_j^{1+\alpha})\\
& = |\nabla u_p(0)|(1/j+z_j)+\O(x_j^{1+\alpha})-|\nabla u_p(0)|/j-\O(x_j^{1+\alpha})+\delta_j z_j+\O(z_j^{1+\alpha}),
\end{align*}
where $\delta_j\to 0$. Hence 
$$
u_p^j(x_j)=u_p(x_j)-|\nabla u_p(0)|/j-\O(x_j^{1+\alpha})+\delta_j z_j+\O(z_j^{1+\alpha})\leq u_p(x_j)+\O(x_j^{1+\alpha})+\delta_j  x_je_1,
$$
since  $|z_j|\leq |x_j|$.
Therefore, using (\ref{peter}) again,
\begin{align*}
\frac{u_p^j(x_j)}{u_p(x_j)}&\leq \frac{u_p(x_j)+\O(x_j^{1+\alpha})+\delta_j x_je_1}{u_p(x_j)} \\
&= 1+\frac{\O(x_j^{1+\alpha})+\delta_j x_je_1}{u_p(x_j)}\\
&= 1+\frac{\O(x_j^{1+\alpha})+\delta_j x_je_1}{x_je_1|\nabla u_p(0)|+ \O(x_j^{1+\alpha})}\,\, \to \,1. 
\end{align*}
This contradicts the antithesis  (\ref{erik}).
\end{proof}

In the next proposition the assumption that $g\geq u_p$ admits multiplication of $u_p$ by arbitrarily small constants. Thus the restriction is crucial only near the boundary $\dd \Omega$.

\begin{prop}\label{prop:eigcomp} Assume that $\Omega$ is a $C^{1,\alpha}$ domain. Suppose $v$ is a non-negative weak solution of
$$
\begin{cases}
\dd_t(|v|^{p-2}v)=\lap_p v,& \Omega\times (0,T),\\
v(x,0)=g(x)\geq 0, & x\in  \Omega,\\
v(x,t)=0, & (x,t)\in \dd\Omega\times (0,T).
\end{cases}
$$
Assume in addition that $g\geq u_p$ where $u_p$ is an extremal of \eqref{rayleigh}. Then 
$$
v\geq e^{-\frac{\lambda_p }{p-1}t}u_p,
$$
in $\Omega\times (0,T)$. Similarly, if $ g\leq u_p$ then the reverse inequality holds.
\end{prop}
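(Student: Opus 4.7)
The plan is to approximate $V(x,t) = e^{-\lambda_p t/(p-1)} u_p(x)$ from the inside using the $C^{1,\alpha}$-exhaustion $\Omega_j \subset\subset \Omega$ of Lemma \ref{lem:cj}, together with separable solutions on each $\Omega_j$ built from the Dirichlet extremals $u_p^j$. On each $\Omega_j$ the approximant vanishes on $\partial \Omega_j$, so the lateral hypothesis (\ref{limlim}) is automatic against $v$ (which is strictly positive in the interior of $\Omega$ by the weak Harnack inequality), and the comparison principle Theorem \ref{thm:comp} becomes applicable. The inequality in $\Omega \times (0,T)$ is then recovered in the limit $j \to \infty$.

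For the forward direction $g \geq u_p \Rightarrow v \geq V$, I would introduce
$$V_j(x,t) := c_j\, e^{-\frac{\lambda_p^j}{p-1} t}\, u_p^j(x) \qquad \text{on } \Omega_j \times (0,T),$$
which is a weak solution by the homogeneity of Trudinger's equation. By Lemma \ref{lem:cj} and the hypothesis, $V_j(\cdot,0) = c_j u_p^j \leq u_p \leq g = v(\cdot,0)$, while $V_j \equiv 0$ and $v > 0$ on $\partial \Omega_j \times (0,T)$. Since $v$ is Hölder continuous and strictly positive, it is bounded below by a positive constant on $\overline{\Omega_j} \times [\tau, T]$ for every $\tau > 0$, so Theorem \ref{thm:comp} yields, for all $\beta > 1$ and a.e. $0 < t_1 < t_2 < T$,
$$\int_{\Omega_j} \bigl(V_j^{p-1} - \beta^{p-1} v^{p-1}\bigr)^+ dx \,\Big|_{t_2} \leq \int_{\Omega_j} \bigl(V_j^{p-1} - \beta^{p-1} v^{p-1}\bigr)^+ dx \,\Big|_{t_1}.$$
Sending $t_1 \to 0^+$, the continuity of $V_j$ in $t$ together with the $L^1$-convergence $v^{p-1}(\cdot,t_1) \to g^{p-1}$ from (\ref{initial}) forces the right-hand side to tend to $\int_{\Omega_j} ((c_j u_p^j)^{p-1} - \beta^{p-1} g^{p-1})^+ dx = 0$, the integrand vanishing because $c_j u_p^j \leq u_p \leq g \leq \beta g$. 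Hence $V_j \leq \beta v$ on $\Omega_j \times (0,T)$, and letting $\beta \to 1^+$ gives $V_j \leq v$. Finally, fixing $(x,t) \in \Omega \times (0,T)$ and letting $j \to \infty$, the properties $c_j \to 1$, $\lambda_p^j \to \lambda_p$, and $u_p^j(x) \to u_p(x)$ (all extractable from the proof of Lemma \ref{lem:cj}) deliver $v(x,t) \geq e^{-\lambda_p t/(p-1)} u_p(x)$.

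The reverse direction $g \leq u_p \Rightarrow v \leq V$ follows by a symmetric argument on each $\Omega_j$: now $V$ itself plays the role of supersolution, and since $V \geq e^{-\lambda_p T/(p-1)} \min_{\overline{\Omega_j}} u_p > 0$ on $\overline{\Omega_j} \times [0,T]$, the essential-infimum hypothesis of Theorem \ref{thm:comp} is satisfied directly without any further exhaustion. The comparison is carried out between $v$ and a slight dilation $d_j V$ (still a solution by homogeneity), where $d_j \to 1^+$ is taken large enough that $d_j V \geq v$ on $\partial \Omega_j \times (0,T)$; the same integral inequality followed by $d_j \to 1$ and $j \to \infty$ then gives $v \leq V$. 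The main obstacle throughout is precisely the essential-infimum condition in Theorem \ref{thm:comp}: both $V$ and the approximants $V_j$ have $\mathrm{ess\,inf}$ equal to zero on the natural cylinder, so the detour through the exhaustion $\Omega_j$ and the careful $t_1 \to 0$ passage (anchored on (\ref{initial}) and the sharp pointwise control $c_j u_p^j \leq u_p$) is what is needed to convert the comparison principle into a statement adapted to solutions that vanish on $\partial \Omega$.
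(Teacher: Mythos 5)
Your forward direction ($g\geq u_p$) is essentially the paper's own proof: exhaust $\Omega$ by the $C^{1,\alpha}$ subdomains of Lemma \ref{lem:cj}, compare $v$ with $c_j e^{-\lambda_p^j t/(p-1)}u_p^j$ on $\Omega_j\times(0,T)$ using the positive lower bound on $v$ over $\overline{\Omega_j}$, and pass to the limit; the paper invokes Corollary \ref{classic} where you spell out the $t_1\to 0$ and $\beta\to 1$ steps, which is fine.

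The reverse direction, however, has a genuine gap. You compare $v$ with $d_jV$ on $\Omega_j\times(0,T)$ and assert that one can take $d_j\to 1^+$ with $d_jV\geq v$ on $\partial\Omega_j\times(0,T)$. Nothing in the hypotheses yields this: it amounts to $\sup_{\partial\Omega_j\times(0,T)}v/V\to 1$, i.e.\ to the asymptotic boundary version of the very inequality $v\leq V$ you are trying to prove. A priori you only know that $v$ and $V$ both vanish on $\partial\Omega$ and that $g\leq u_p$ at time zero; the solution $v(\cdot,t)$ for $t>0$ could, for all you have shown, have a much larger ``normal derivative'' than $V(\cdot,t)$ near $\partial\Omega_j$, so the best you can guarantee is some finite $d_j$ (bounded, say, via a barrier), which in the limit only gives $v\leq dV$ for some $d>1$. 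The correct ``similar'' argument goes in the opposite geometric direction: approximate $\Omega$ from the \emph{outside} by $C^{1,\alpha}$ domains $\Omega^j\supset\supset\Omega$ shrinking to $\Omega$, and use the outer analogue of Lemma \ref{lem:cj} to get extremals $u_p^j$ of $\Omega^j$ with $u_p\leq c_ju_p^j$ in $\Omega$ and $c_j\to 1$, $\lambda_p^j\to\lambda_p$. Then $W_j:=c_je^{-\lambda_p^j t/(p-1)}u_p^j$ is a solution on $\Omega_T$ with $W_j\geq\textrm{const}>0$ on $\overline\Omega\times[0,T]$ (since $u_p^j>0$ on $\overline\Omega\subset\subset\Omega^j$), $W_j(\cdot,0)\geq u_p\geq g$, and $W_j>0=v$ on $\partial\Omega\times(0,T)$, so the essential-infimum hypothesis and the lateral condition of Theorem \ref{thm:comp} hold on all of $\Omega_T$ without any dilation constant to control; letting $j\to\infty$ gives $v\leq V$.
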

\begin{proof} By Harnack's inequality (Theorem 2.1 in \cite{Tuomo}) we can again conclude that $v>0$ in $\Omega_T$. Indeed, if $v$ vanishes at some point then $g$ vanishes identically, which is excluded by the hypothesis.

Let $\Omega_j$ be a sequence of smooth subdomains exhausting $\Omega$. Let $u^j_p$ be the extremal in $\Omega_j$ with eigenvalue $\lambda_p^j$ and the same $L^p$-norm as $u_p$. On page 189 in \cite{Peter}, it is argued for that $\lambda_p^j\to \lambda_p$. By Lemma \ref{lem:cj}, we may choose $\Omega_j$ for which there are constants  $c_j\to 1$, such that $c_ju_p^j\leq u_p$ in $\Omega_j$, for $j$ large enough. Since $\min\{v\}>0$ in $\overline \Omega_j\times (0,T)$ and $v(x,0)=g(x)\geq u_p(x)\geq c_ju_p^j(x)$ for all $x\in \Omega_j$, comparison (Corollary \ref{classic}) implies
$$
v\geq c_je^{\frac{-\lambda^j_p }{p-1}t}u^j_p.
$$
We may pass to the limit in the above inequality and conclude that $v\geq e^{-\frac{\lambda_p }{p-1}t}u_p$. The reversed inequality can be proved similarly.
\end{proof}
As a corollary we obtain that solutions with extremals of \eqref{rayleigh} as initial data are separable.
\begin{cor} Assume that $\Omega$ is a $C^{1,\alpha}$ domain. Suppose $v$ is a weak solution of
$$
\begin{cases}
\dd_t(|v|^{p-2}v)=\lap_p v,\quad \Omega\times (0,T),\\
v(x,0)=u_p(x), \quad x\in  \Omega,\\
v(x,t)=0, \quad x\in \dd\Omega,
\end{cases}
$$
where $u_p$ is an extremal of \eqref{rayleigh}. Then 
$$
v= e^{-\frac{\lambda_p }{p-1}t}u_p.
$$
\end{cor}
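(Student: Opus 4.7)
The plan is to invoke Proposition \ref{prop:eigcomp} in both directions. Since the initial datum of $v$ is \emph{exactly} $u_p$, the trivial inequalities $g \equiv u_p \geq u_p$ and $g \equiv u_p \leq u_p$ both hold. The hypotheses of Proposition \ref{prop:eigcomp} are therefore met twice over: we may read it once with the initial datum dominating $u_p$, and once with the initial datum dominated by $u_p$.

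First I would apply Proposition \ref{prop:eigcomp} in the ``$g \geq u_p$'' direction to obtain
$$
v(x,t) \,\geq\, e^{-\frac{\lambda_p}{p-1}t}\,u_p(x)\qquad\text{in } \Omega\times (0,T).
$$
Then I would apply the same proposition in the ``$g \leq u_p$'' direction (the reversed inequality asserted at the end of its statement) to obtain
$$
v(x,t) \,\leq\, e^{-\frac{\lambda_p}{p-1}t}\,u_p(x)\qquad\text{in } \Omega\times (0,T).
$$
Combining the two inequalities yields the separated-variables identity $v(x,t)=e^{-\lambda_p t/(p-1)}u_p(x)$.

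There is essentially no obstacle at the level of this corollary: all the work has already been done in Proposition \ref{prop:eigcomp}, whose proof in turn rests on Lemma \ref{lem:cj} (the exhaustion by uniformly $C^{1,\alpha}$ subdomains with $c_j u_p^j \leq u_p$) and on Corollary \ref{classic} applied on those subdomains where the weak Harnack inequality forces $v$ to be bounded away from zero. The only subtlety worth flagging is that the corollary concerns a genuine weak solution, so it is both a supersolution and a subsolution, which is precisely what is needed to run Proposition \ref{prop:eigcomp} in both directions and pin down $v$ uniquely rather than merely sandwiching it.
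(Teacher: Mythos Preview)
Your proposal is correct and matches the paper's intended argument: the corollary is stated immediately after Proposition~\ref{prop:eigcomp} with no separate proof, precisely because applying that proposition with $g=u_p$ in both the ``$g\geq u_p$'' and ``$g\leq u_p$'' directions yields the two-sided bound and hence the equality.
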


We can now obtain pointwise control of the large time behavior assuming that the initial data $g$ satisfy $0\leq g\leq u_p$ where $u_p$ is an extremal of \eqref{rayleigh}.

\begin{cor} \label{cor:large} Assume that $\Omega$ is a $C^{1,\alpha}$ domain and that $g\in  C(\overline \Omega)$ satisfies $0\leq g\leq u_p$ where $u_p$ is an extremal of \eqref{rayleigh}. Then the convergence
  $$u(x) \,=\, \lim_{t \to \infty}  e^{\frac{\lambda_p}{p-1}t}v(x,t) $$
  is uniform in $\overline\Omega$.

If in addition $g\geq w_p$ where $w_p$ is another extremal, then the limit function $u(x)$ is non-zero and therefore an extremal.
\end{cor}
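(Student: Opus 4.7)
The plan is to upgrade the $L^p(\Omega)$-convergence of $V(\cdot,t):=e^{\lambda_p t/(p-1)}v(\cdot,t)$ to $u$ (established in \cite{HLtrud}) into uniform convergence on $\overline{\Omega}$, by showing that the family $\{V(\cdot,t)\}_{t\geq 1}$ is uniformly equicontinuous on $\overline{\Omega}$. The starting point is Proposition \ref{prop:eigcomp} applied with the hypothesis $g \leq u_p$, which gives the pointwise bound $0 \leq V(x,t) \leq u_p(x)$ on $\Omega\times(0,\infty)$. In particular $V$ is uniformly bounded by $\|u_p\|_\infty$, and since $u_p \in C(\overline{\Omega})$ vanishes on $\partial\Omega$, for every $\varepsilon>0$ there exists a boundary strip $\{\dist(\cdot,\partial\Omega)<\eta\}$ on which $V(\cdot,t)<\varepsilon$ uniformly in $t$.

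To obtain interior equicontinuity I would exploit the homogeneity of Trudinger's equation. For each $s\geq 0$, the time-translated rescaling $v_s(x,t):=e^{\lambda_p s/(p-1)}v(x,s+t)$ is again a weak solution, with initial datum $V(\cdot,s)\leq u_p$. Proposition \ref{prop:eigcomp} then yields $v_s\leq u_p$, with a bound uniform in $s$. The local H\"older regularity for Trudinger's equation from \cite{Tuomo} and \cite{Ves} supplies, for each compact $K\subset\subset\Omega$, an estimate of the form $[v_s(\cdot,1)]_{C^\alpha(K)}\leq C(K,\|u_p\|_\infty)$, with $C$ independent of $s$. Since $V(\cdot,s+1)=e^{\lambda_p/(p-1)}v_s(\cdot,1)$, this yields a uniform modulus of continuity for $V(\cdot,t)$ on every compact subdomain of $\Omega$ for all $t\geq 1$. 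Combined with the boundary decay established above, this gives uniform equicontinuity of $\{V(\cdot,t)\}_{t\geq 1}$ on $\overline{\Omega}$.

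Arzel\`a--Ascoli together with the uniform $L^\infty$-bound then shows that every sequence $t_k\to\infty$ admits a subsequence along which $V(\cdot,t_k)$ converges uniformly on $\overline{\Omega}$; any such limit must coincide a.e.\ with $u$ by the $L^p$-convergence from \cite{HLtrud}, and hence everywhere by continuity. This forces the full limit to exist uniformly on $\overline{\Omega}$. For the final assertion, applying Proposition \ref{prop:eigcomp} in the reverse direction (using $g\geq w_p$) yields $V(x,t)\geq w_p(x)$ in $\Omega\times(0,\infty)$; passing to the limit gives $u\geq w_p>0$ in $\Omega$, so $u$ is nontrivial and, by \cite{HLtrud}, must therefore be an extremal of \eqref{rayleigh}.

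The main difficulty will be invoking the interior H\"older estimate for the rescaled solutions $v_s$ in the correct form: what is needed is a regularity constant depending only on $\|v_s\|_\infty$ and the compact set $K\subset\subset\Omega$, with no dependence on the time origin $s$. This is exactly the content of the local regularity results in \cite{Tuomo} and \cite{Ves}, and the uniform bound $v_s\leq u_p$ provided by Proposition \ref{prop:eigcomp} is what makes these estimates applicable uniformly in $s$.
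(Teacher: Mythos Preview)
Your proposal is correct and follows essentially the same route as the paper: both arguments use Proposition~\ref{prop:eigcomp} to obtain the uniform bound $0\le e^{\lambda_p t/(p-1)}v(\cdot,t)\le u_p$, combine interior H\"older estimates for the time-shifted rescalings with the boundary decay of $u_p$ to get equicontinuity on $\overline{\Omega}$, and then upgrade the $L^p$-convergence from \cite{HLtrud} to uniform convergence via Arzel\`a--Ascoli. The only cosmetic differences are that the paper works with a discrete sequence $\tau_k\to\infty$ and cites \cite{TuomoHolder}, \cite{Tuomo2} for the H\"older estimate (rather than \cite{Tuomo}, \cite{Ves}), and defers the final compactness step to \cite{HL}.
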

\begin{proof} 
Let $\tau_k$ be an increasing sequence of positive numbers such that $\tau_k\to \infty$ as $k\rightarrow\infty$. In Theorem 1 in \cite{HLtrud} it is proved that $\lim_{k\rightarrow \infty}e^{\frac{\lambda_p \tau_k}{p-1}}v(\cdot,\tau_k)$ exists in $L^p(\Omega)$.  We now argue that this convergence is uniform in $\overline{\Omega}$. 
Let 
$$
v^k(x,t)=e^{\frac{\lambda_p \tau_k}{p-1}}v(x,t+\tau_k).
$$
We remark that $e^{-\frac{\lambda_p t}{p-1}}u_p $ is a solution of Trudinger's equation. By the comparison with extremals (Proposition \ref{prop:eigcomp}) and the fact that $v$ is non-negative, 
\begin{equation*}
0\leq v^k(x,t)\leq e^{-\frac{\lambda_p t}{p-1}}u_p(x)\leq u_p(x),
\end{equation*}
for $(x,t)\in \Omega\times [-1,1]$ for all $k\in \N$ large enough. These bounds together with the local H\"older continuity (Theorem 2.8 in \cite{TuomoHolder} and Theorem 2.5 in \cite{Tuomo2})  give that $v^k$ is uniformly bounded in $C^\alpha(B\times [0,1])$ for any ball $B\subset\subset \Omega$.  Using a covering argument and that $u_p$ is continuous up to the boundary, it is standard to conclude that $v^k$ is equicontinuous in $\overline \Omega\times [0,1]$. From this the result follows as in the proof of Theorem 1.3 in \cite{HL}.

The last part of the statement follows from the observation that in this case we also have the bound 
$$
v^k(x,t)\geq e^{-\frac{\lambda_p t}{p-1}}w_p(x),
$$
which forces the limit to be non-zero. The result then follows from Theorem 1 in \cite{HLtrud}.
\end{proof}

\begin{rem} We note that Corollary \ref{cor:large} can be proved using Theorem \ref{thm:unique} if $\Omega$ is a star-shaped Lipschitz domain and not necessarily a $C^{1,\alpha}$ domain.
\end{rem}

\section{Weak and viscosity solutions}\label{sec:visc}
We shall  prove that weak solutions are also viscosity solutions when $p\geq 2$; see the definition in \cite{BhaMar}. For the general theory we refer to \cite{useless}. As always, this requires at least a comparison principle for classical solutions. 

\begin{thm}\label{visco}  Let $p\geq 2$. A weak solution   $v\geq 0$  of 
$$
\partial_t(v^{p-1})=\lap_p v
$$
in $\Omega_T$  is also a viscosity solution.
\end{thm}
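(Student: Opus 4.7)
The strategy is the classical perturbation argument that turns a failure of the viscosity property into a failure of weak comparison. I would prove $v$ is both a viscosity super- and a viscosity subsolution; the two cases being symmetric, I focus on the supersolution side.

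Assume by contradiction that $v$ fails to be a viscosity supersolution at some $(x_0,t_0)\in \Omega_T$. Then there is a $C^2$ function $\phi$ with $\phi(x_0,t_0)=v(x_0,t_0)$, $\phi\leq v$ in a parabolic neighborhood, and
$$\partial_t(|\phi|^{p-2}\phi)(x_0,t_0)-\Delta_p\phi(x_0,t_0)<0.$$
Replacing $\phi$ by $\phi-|x-x_0|^4-(t-t_0)^2$ makes the touching strict without changing $\phi_t$, $\nabla\phi$, or $D^2\phi$ at $(x_0,t_0)$, so the strict operator inequality is preserved and extends, by continuity, to a small cylinder $Q_r=B_r(x_0)\times(t_0-r,t_0+r)$ in which $\phi$ is a classical strict subsolution. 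I would handle the degenerate case $v(x_0,t_0)=0$ separately: by continuity and the weak Harnack inequality quoted in Section~\ref{sec:prel}, this forces $v\equiv 0$ on $\Omega\times(0,t_0]$, a case easily addressed on its own. Otherwise $v(x_0,t_0)>0$, and by continuity of $v$ we may shrink $r$ so that $\operatorname{ess\,inf}_{Q_r} v>0$.

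Now the key step: on the compact set $\partial_p Q_r$ strict touching gives $\phi\leq v-m$ for some $m>0$, so for any $0<\varepsilon<m$ one has $\phi+\varepsilon<v$ on $\partial_p Q_r$ while $\phi+\varepsilon>v$ at $(x_0,t_0)$. For $p\geq 2$ the only term in the operator affected by the shift is $(p-1)(\phi+\varepsilon)^{p-2}\phi_t$, which is continuous in $\varepsilon$ (since the exponent $p-2\geq 0$ is non-negative, no blow-up occurs even where $\phi$ vanishes); hence for $\varepsilon$ sufficiently small $\phi+\varepsilon$ remains a classical strict subsolution in $Q_r$, hence a weak subsolution. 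Applying Corollary~\ref{classic} to the weak supersolution $v$ and the weak subsolution $\phi+\varepsilon$ in $Q_r$ (the essinf hypothesis being met by $v$) gives $\phi+\varepsilon\leq v$ throughout $Q_r$, contradicting $\phi(x_0,t_0)+\varepsilon>v(x_0,t_0)$. The viscosity subsolution case is dual: a $C^2$ function $\phi$ touches $v$ from above with $\partial_t(|\phi|^{p-2}\phi)-\Delta_p\phi>0$ at $(x_0,t_0)$, and one uses $\phi-\varepsilon$ as a strict classical supersolution lying above $v$ on the parabolic boundary to derive a contradiction via the same comparison.

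The main technical obstacle is the preservation of the strict subsolution (respectively supersolution) inequality after the $\pm\varepsilon$ perturbation, and this is precisely where the hypothesis $p\geq 2$ is used: for $1<p<2$ the factor $(\phi+\varepsilon)^{p-2}$ can blow up as $\phi+\varepsilon\to 0^+$, so continuous dependence on $\varepsilon$ fails without an a priori positivity bound on $\phi$. A secondary, routine issue is ensuring the hypotheses of Corollary~\ref{classic} hold on $Q_r$; this is handled by shrinking $r$ to keep $v$ bounded away from zero and by noting that the boundary inequality we construct is classical, which is stronger than the liminf/limsup condition~\eqref{limlim}.
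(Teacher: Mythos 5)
Your argument is correct in substance and follows the same overall strategy as the paper: reduce a failure of the viscosity inequality to a violation of the weak comparison principle on a small cylinder, after perturbing the test function so that it lies strictly below (resp.\ above) $v$ on the parabolic boundary. The one genuine difference is the perturbation device. You shift additively, $\phi\mapsto\phi+\e$, and must then re-verify that the strict classical subsolution inequality survives, which is where you invoke $p\geq 2$ (continuity of $s\mapsto s^{p-2}$ on compacts). The paper instead dilates multiplicatively, $\phi\mapsto\lambda\phi$ with $\lambda>1$: by the homogeneity of the equation, $\lambda\phi$ is \emph{automatically} a strict classical subsolution, no continuity argument is needed, and the contradiction is read off at the touching point from $\lambda\phi(x_0,t_0)>v(x_0,t_0)>0$. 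The multiplicative trick is the cleaner route for this particular equation (it is exactly the homogeneity emphasized in the introduction), and it also keeps the perturbed comparison function trivially non-negative, which matters because Theorem \ref{thm:comp} is stated for non-negative functions; with your $\phi+\e$ you should additionally shrink the cylinder so that $\phi>0$ there before invoking the comparison.

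One spot in your proposal is not fully covered: the viscosity \emph{sub}solution test at a point where $v(x_0,t_0)=0$. Your ``dual'' comparison with $\phi-\e$ cannot work there, since $\phi-\e$ is negative near the touching point and neither function has positive essential infimum, so the hypotheses of Theorem \ref{thm:comp} fail. The paper handles this case directly and without any comparison: $\phi\geq v\geq 0$ with $\phi(x_0,t_0)=0$ means $\phi$ attains an interior minimum, so $\phi_t=0$, $\nabla\phi=0$, $D^2\phi\geq 0$ there, and the required inequality $\partial_t(\phi^{p-1})-\lap_p\phi\leq 0$ holds for free when $p\geq 2$. You flag the degenerate case as ``easily addressed,'' and it is, but the easy argument is this pointwise one, not a comparison; you should state it, since it is also the second place where $p\geq 2$ is genuinely used.
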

\begin{proof}
We   treat supersolutions and subsolutions separately. The result follows by combining the parts.\\

{\bf \noindent Part 1: supersolution.} By Theorem 7.1 in \cite{Tuomo}, either $v>0$ in $\Omega\times (0,T)$ or there is a  time $t^*$ such that  $v$ is identically zero in $\Omega \times (0,t^*]$ and $v > 0$ in $\Omega \times (t^*,T).$ Therefore, we may as well assume that $v>0$. Suppose that there is a test function\footnote{$C^2$ in $x$ and $C^1$ in $t$} $\phi$ such that $\phi(x_0,t_0)=v(x_0,t_0)$ and $\phi<v$ near $(x_0,t_0)$.\footnote{The definition in \cite{BhaMar} does not require strict inequality away from $(x_0,t_0)$. This however can be accomplished by subtracting $(x-x_0)^4+(t-t_0)^4$ from $\phi$.} We assume, towards a contradiction, that the viscosity inequality for $\phi$ fails:
$$
\partial_t (\phi(x_0,t_0)^{p-1})\,<\,\lap_p \phi(x_0,t_0).
$$
By continuity, the inequalities
$$
\partial_t (\phi^{p-1})<\lap_p \phi,\quad \phi>0,
$$
hold in a  neighborhood of $(x_0,t_0)$. Again, by continuity, we can find a $\lambda>1$ and a possibly smaller cylindrical neighborhood $B_\delta (x_0)\times (t_0-\delta,t_0+\delta)$ such that $\lambda\phi <v$ on the parabolic boundary of $B_\delta (x_0)\times (t_0-\delta,t_0+\delta)$. Note also that the inequality
$$
\partial_t ((\lambda\phi)^{p-1})<\lap_p (\lambda \phi)
$$
remains true in $B_\delta (x_0)\times (t_0-\delta,t_0+\delta)$ by homogeneity. Therefore, Theorem  \ref{thm:comp} implies $\lambda\phi\leq v$ in $B_\delta (x_0)\times (t_0-\delta,t_0+\delta)$. This contradicts that $\phi(x_0,t_0)=v(x_0,t_0)$ since $\phi(x_0,t_0) = v(x_0,t_0)>0$.\\

{\bf \noindent Part 2: subsolution.} Now we prove that $v$ is a viscosity subsolution. Suppose $\phi(x_0,t_0)=v(x_0,t_0)$ and $\phi>v$ otherwise. If $v(x_0,t_0)>0$, we may argue as in the case of a supersolution. Suppose instead that $v(x_0,t_0)=0$. Then $\phi$ is a non-negative function attaining a minimum at $(x_0,t_0)$. Hence, $\phi_t(x_0,t_0)= 0$,\, $\nabla \phi (x_0,t_0)=0$ and $D^2\phi(x_0,t_0)\geq 0$. Hence, 
$$
\partial_t (\phi^{p-1})-\lap_p \phi \leq 0, 
$$
as required.
\end{proof}

\section{Appendix}
 The pointwise maximum of a subsolution and a positive constant is again a subsolution. To be on the safe side, we present a proof that does \emph{not} use the comparison principle.
\begin{prop}\label{prop:maxsubsol}
Let $u \geq 0$ be a weak subsolution in $\Omega_T= \Omega\times (0,T)$ and $c>0$. If $p<2$, assume in addition that $u$ is continuous. Then the function $\max \{u,c\}$  is also a weak subsolution in $\Omega_T$.
\end{prop}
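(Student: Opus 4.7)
The plan is to verify the weak subsolution inequality for $w:=\max\{u,c\}$. Given any non-negative $\varphi\in C_0^\infty(\Omega_T)$, one splits $\Omega_T=\{u>c\}\cup\{u\leq c\}$. Since $\nabla w=\chi_{\{u>c\}}\nabla u$ a.e. and $w^{p-1}=c^{p-1}$ on $\{u\leq c\}$, and since $\varphi$ has compact support so that $\iint c^{p-1}\varphi_t\,dxdt=0$, the target inequality reduces to
\begin{equation*}
-\iint_{\{u>c\}}(u^{p-1}-c^{p-1})\varphi_t\,dxdt \,+\, \iint_{\{u>c\}}|\nabla u|^{p-2}\nabla u\cdot\nabla\varphi\,dxdt \,\leq\,0. \tag{$\star$}
\end{equation*}
So the entire problem reduces to proving $(\star)$.

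To achieve $(\star)$, I would revisit the subsolution inequality for $u$ itself and mimic the Steklov-averaging procedure in the proof of Theorem \ref{thm:comp}. That procedure delivers, for non-negative $\phi\in C_0^\infty(\Omega\times(2h,T-h))$,
$$
\iint \dd_t[u^{p-1}]_h\,\phi\,dxdt + \iint[|\nabla u|^{p-2}\nabla u]_h\cdot\nabla\phi\,dxdt \,\leq\,0.
$$
The choice of $\phi$ that eventually localizes to $\{u>c\}$ is
$$
\phi\,=\,\varphi\,H_\delta\!\bigl([u^{p-1}]_h-c^{p-1}\bigr),
$$
with $H_\delta$ as in \eqref{heavi} and $G_\delta$ its antiderivative (as in the proof of Theorem \ref{thm:comp}). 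Using the identity $\dd_t[u^{p-1}]_h\cdot H_\delta([u^{p-1}]_h-c^{p-1})=\dd_t G_\delta([u^{p-1}]_h-c^{p-1})$, a single integration by parts in $t$ transfers the time derivative onto $\varphi$, while the spatial term expands into one piece carrying $H_\delta\nabla\varphi$ and one piece carrying $\varphi H_\delta'([u^{p-1}]_h-c^{p-1})\nabla[u^{p-1}]_h$.

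Sending $h\to 0$, this last piece tends to
$$
(p-1)\iint\varphi\,H_\delta'(u^{p-1}-c^{p-1})\,u^{p-2}\,|\nabla u|^p\,dxdt\,\geq\,0,
$$
which I discard because of its sign. Crucially, on the support of $H_\delta'$ one has $c\leq u\leq(c^{p-1}+\delta)^{1/(p-1)}$, so $u^{p-2}$ is bounded both above and below — this removes integrability concerns for $p>2$ (where $u^{p-2}$ could otherwise grow) as well as for $1<p<2$ (where it could blow up near $\{u=0\}$, making the continuity hypothesis for the latter range sufficient). Sending $\delta\to 0$ by dominated convergence, with $G_\delta(s)\to s^+$ and $H_\delta(s)\to\chi_{\{s>0\}}$ off the null set $\{u^{p-1}=c^{p-1}\}$, turns the remaining inequality into exactly $(\star)$.

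The main obstacle is the rigorous admissibility of the composite test function $\varphi H_\delta([u^{p-1}]_h-c^{p-1})$, which is only Lipschitz and not $C_0^\infty$. It must be approximated by standard spatial mollifications, with convergence of the product pairing justified in the spirit of the $A_h$/$B_h$ bookkeeping on pages \pageref{page:bh}--\pageref{page:bh2}: namely, $\nabla\phi$ is controlled in $L^p_{\mathrm{loc}}$ so it pairs correctly against $[|\nabla u|^{p-2}\nabla u]_h\to|\nabla u|^{p-2}\nabla u$ in $L^{p/(p-1)}_{\mathrm{loc}}$. Once this admissibility is handled, the two successive limits $h\to 0$ and $\delta\to 0$ yield $(\star)$ and hence the weak subsolution inequality for $w$, for every $1<p<\infty$.
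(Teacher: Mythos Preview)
Your approach is essentially the same as the paper's: Steklov averages, the test function $\varphi\,H_\delta([u^{p-1}]_h-c^{p-1})$, the chain rule producing $G_\delta$, discarding the sign-definite piece, then $\delta\to 0$ to recover $(\star)$, which is exactly the paper's inequality (\ref{eq:almost}).

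One point of care: the bound $c\leq u\leq (c^{p-1}+\delta)^{1/(p-1)}$ that you invoke holds only \emph{after} $h\to 0$, whereas the uniform $L^p$-control you need for the product pairing must be established \emph{before} that passage, i.e.\ on the set $\{0<[u^{p-1}]_h-c^{p-1}<\delta\}$. There one only knows $c^{p-1}<[u^{p-1}]_h<c^{p-1}+\delta$, which says nothing pointwise about $u$. The paper handles this in two ways: for $p>2$ a H\"older estimate on the Steklov average gives $|[u^{p-2}\nabla u]_h|\leq [u^{p-1}]_h^{(p-2)/(p-1)}[|\nabla u|^{p-1}]_h^{1/(p-1)}$, and the first factor is bounded by $(c^{p-1}+\delta)^{(p-2)/(p-1)}$ on the set; for $p<2$ the continuity of $u$ is used precisely here, to infer $u>c/2$ on $\{[u^{p-1}]_h>c^{p-1}\}\cap\operatorname{supp}\varphi$ once $h$ is small, so that $u^{p-2}\leq (c/2)^{p-2}$ inside the average. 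This is the actual content of the $A_h/B_h$ bookkeeping you cite, and it is the reason the continuity hypothesis appears only for $p<2$.
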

\begin{proof} Since $u$ is a subsolution, we have as in \eqref{eq:3rdineq} on page 7
\begin{equation}
\label{eq:subsolh}
-\int_0^T\!\!\int_\Omega \partial_t([u^{p-1}-c^{p-1}]_h) \phi \,dx dt \,\geq\, \int_0^T\!\!\int_\Omega [|\nabla u|^{p-2}\nabla u]_h \cdot\nabla \phi \,dx dt, 
\end{equation}
for $\phi\geq 0$ in $C_0^\infty(\Omega_T)$, provided that the parameter $h$ in the Steklov average is small enough. Let $\psi \geq 0$ be a test function in $C_0^{\infty}(\Omega_T)$. Now 
$$
\phi(x,t) = \psi(x,t) H_\delta([u^{p-1}-c^{p-1}]_h) 
$$
is a valid test function in \eqref{eq:subsolh} for $0<h<h_{\psi}$ say; here $H_\delta$ is as in the proof of Theorem 1. Using the rule  
$$
\partial_t \,G_\delta\!\left([u^{p-1}-c^{p-1}]_h\right)=H_\delta\left([u^{p-1}-c^{p-1}]_h\right)\partial_t\!\left([u^{p-1}-c^{p-1}]_h\right)
$$
and integrating by parts, we obtain
\[
\begin{split}
\int_0^T\!\!\int_\Omega G_\delta([u^{p-1}-c^{p-1}]_h)\psi_t \,dx dt&\geq 
\int_0^T\!\! \int_\Omega H_\delta([u^{p-1}-c^{p-1}]_h)[|\nabla u|^{p-2}\nabla u]_h \cdot\nabla \psi  \,dx dt\\
&+\frac{1}{\delta}\iint_{\{0<[u^{p-1}-c^{p-1}]_h<\delta\}}[|\nabla u|^{p-2}\nabla u]_h\cdot\nabla [u^{p-1}]_h \psi \,dx dt.
\end{split}
\]
Now we wish to pass $h\to 0$. By standard reasoning  with Steklov averages  we obtain
\begin{align}\label{prat}
  \int_0^T\!\!\int_\Omega G_\delta(u^{p-1}-c^{p-1})\psi_t \,dx dt&\geq 
\int_0^T\!\! \int_\Omega H_\delta(u^{p-1}-c^{p-1})|\nabla u|^{p-2}\nabla u \cdot\nabla \psi  \,dx dt\nonumber\\ 
+{{\limsup_{h\to 0}}}\,\,\frac{1}{\delta}\iint_{\{0<[u^{p-1}-c^{p-1}]_h<\delta\}}&
\psi[|\nabla u|^{p-2}\nabla u]_h\cdot\nabla [u^{p-1}]_h  \,dx dt.
\end{align}
We claim that, upon extracting a subsequence,
\begin{align*}
&\underset{h\to 0}{\mathrm{lim}}\,\iint_{\{0<[u^{p-1}-c^{p-1}]_h<\delta\}}\psi\,[|\nabla u|^{p-2}\nabla u]_h\cdot\nabla [u^{p-1}]_h  \,dx dt\\ &=\,
(p-1)\iint_{\{0<u^{p-1}-c^{p-1}<\delta\}}\psi\, u^{p-2}|\nabla u|^{p}\,dx dt\,\geq\,0,
\end{align*}
so that the last integral in (\ref{prat}) can be thrown away (before we send $\delta$ to zero).

This requires some estimates.
First, we note the convergence
$$
 |[|\nabla u|^{p-2}\nabla u]_h| \to |\nabla u|^{p-2}\nabla u\quad \text{in}\quad L_\text{loc}^{\frac{p}{p-1}}(\Omega_T)
 $$
 for the first factor. The second factor is treated separately depending on the sign of $p-2$.  
 In the case $p > 2$ we have by H\"{o}lder's inequality
\[
\begin{split}
|[u^{p-2}\nabla u]_h |&= \Big|\frac{1}{h}\int_{t-h}^h u^{p-2}|\nabla u| d\tau \Big|\\
&\leq \left(\frac{1}{h}\int_{t-h}^h u^{p-1} d\tau\right)^\frac{p-2}{p-1}\left(\frac{1}{h}\int_{t-h}^h |\nabla u|^{p-1} d\tau\right)^\frac{1}{p-1}\\
&\leq \left([u^{p-1}]_h\right)^\frac{p-2}{p-1}\left(\frac{1}{h}\int_{t-h}^h |\nabla u|^{p-1} d\tau\right)^\frac{1}{p-1}\\
&\leq (c^{p-1}+\delta)^\frac{p-2}{p-1}[|\nabla u|^p]_h^\frac{1}{p},
\end{split}
\]
in the set $$\{0<[u^{p-1}-c^{p-1}]_h<\delta\}.$$
Hence, 
$$
\iint_{ \{0<[u^{p-1}-c^{p-1}]_h<\delta\}} \psi|[u^{p-2}\nabla u]_h|^p\, dx dt \leq C\iint_{\Omega_T}\psi[|\nabla u|^p]_h \,dx dt, 
$$
where $C = (c^{p-1}+\delta)^{p(p-2)/(p-1)}.$ The last integral is uniformly bounded in $h$. 

In the case $p<2$ we instead argue as follows. 
The assumed continuity implies that  $u>c/2$ in the support of $\psi$, when $[u^{p-1}-c^{p-1}]_h>0$ and for $h<h(c)$ small enough.

 As a consequence, when $[u^{p-1}-c^{p-1}]_h>0$, we have the estimate	
\[
\begin{split}
|[u^{p-2}\nabla u]_h |=\Big|\frac{1}{h}\int_{t-h}^h u^{p-2}\nabla u d\tau \Big|
\leq \Big|\frac{c}{2}\Big|^{p-2}\frac{1}{h}\int_{t-h}^h |\nabla u| d\tau
\leq \Big|\frac{c}{2}\Big|^{p-2}[|\nabla u|]_h.
\end{split}
\]
Therefore, 
\[
\begin{split}
\displaystyle\iint_{ \{0<[u^{p-1}-c^{p-1}]_h<\delta\}}\psi |[u^{p-2}\nabla u]_h|^p\, dx dt &\leq \Big|\frac{c}{2}\Big|^{p(p-2)} \iint_{\Omega_T}\psi|[\nabla u]_h|^p \,dx dt.
\end{split}
\]
In both cases we can conclude that the integrand in the problematic term is a product of a function converging in $L_\text{loc}^{p/(p-1)}$ and a function with compact support which is bounded in $L_\text{loc}^p$. This is enough to pass $h\to 0$, after having extracted a subsequence.  

Throwing away the positive term and letting $\delta\to 0$, we obtain
\begin{equation}
\label{eq:almost}
\begin{split}
\iint_{\Omega_T\cap \{u>c\}} \left(u^{p-1}-c^{p-1}\right) \psi_t \,dx dt\geq 
\iint_{\Omega_T\cap \{u>c\}} |\nabla u|^{p-2}\nabla u \cdot \nabla \psi \,dx dt.
\end{split}
\end{equation}
Here we have used that 
$H_\delta(u^{p-1}-c^{p-1})\to \chi_{\{u>c\}}$ in $L^q$ for any $q>0$ and in particular in $L^p$. When multiplied with the  $L^\frac{p}{p-1}$-function $|\nabla u|^{p-2}\nabla u$, the product converges. It now remains to note that
\[\begin{split}
\iint_{\Omega_T\cap \{u>c\}} \left(u^{p-1}-c^{p-1}\right) \psi_t \,dx dt&=\int_0^T\!\!\int_\Omega (\max \{u,c\})^{p-1} \psi_t \,dx dt-\int_0^T\!\!\int_\Omega c^{p-1}\psi_t \,dx dt\\
&= \int_0^T\int_\Omega (\max \{u,c\})^{p-1} \psi_t \,dx dt.
\end{split}
\]
Therefore \eqref{eq:almost} is equivalent to
$$
\int_0^T\!\!\int_\Omega (\max \{u,c\})^{p-1} \psi_t \,dx dt\geq \int_0^T\!\!\int_\Omega |\nabla \max \{u,c\}|^{p-2}\nabla \max \{u,c\} \cdot \nabla \psi \,dx dt.
$$
This is the desired inequality for a subsolution.
\end{proof}

\begin{cor}\label{cor:app}
 Under the same assumptions as in Proposition \ref{prop:maxsubsol}, $u\in L^{\infty}_{loc}(\Omega_T).$
 \end{cor}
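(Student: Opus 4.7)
The case $1 < p < 2$ is immediate from the assumed continuity of $u$, so we focus on $p \geq 2$. The plan is to carry out a standard Moser iteration, with Proposition \ref{prop:maxsubsol} playing the key role of removing the degeneracy at $\{u = 0\}$.

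For fixed $c > 0$, consider the subsolution $v_c := \max\{u, c\} \geq c > 0$ provided by Proposition \ref{prop:maxsubsol}. Fix a parabolic cylinder $Q_R \Subset \Omega_T$ together with sub-cylinders $Q_{r_n}$ shrinking to $Q_{r_\infty}$, and truncation levels $k_n \nearrow k_\infty > c$. Test the Steklov-averaged subsolution inequality for $v_c$ with $\phi_n = \eta_n^p\,(v_c^{p-1} - k_n^{p-1})_+^\beta$, where $\eta_n$ is a smooth parabolic cutoff between $Q_{r_n}$ and $Q_{r_{n-1}}$ and $\beta \geq 0$. Proceeding as at the start of the proof of Proposition \ref{prop:maxsubsol}, the time derivative is rewritten as the monotone expression $\partial_t G_{\beta+1}(v_c^{p-1} - k_n^{p-1})$; sending $h \to 0$ and discarding the nonnegative initial-time contribution yields a parabolic Caccioppoli inequality for the excess $(v_c^{p-1} - k_n^{p-1})_+$.

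Combining this estimate with the parabolic Sobolev embedding produces an iterative inequality $Y_{n+1} \leq C\,b^n\,Y_n^{1+\sigma}$, with $\sigma > 0$, where $Y_n := \iint_{Q_{r_n}}(v_c^{p-1} - k_n^{p-1})_+^{\beta+1}\,dx\,dt$. A standard nonlinear iteration lemma then forces $Y_n \to 0$ provided $Y_0$ is small enough, which is arranged by taking $k_\infty$ large in terms of $\|u\|_{L^p(Q_R)}$ and geometric data. Hence $v_c \leq k_\infty$ a.e. on $Q_{r_\infty}$. On the level set $\{v_c > k_n\}$ one has $v_c = u$, so the bound $k_\infty$ is independent of $c$; sending $c \to 0^+$ yields $u \leq k_\infty$ a.e. on $Q_{r_\infty}$, and the corollary follows.

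The main technical obstacle is the Caccioppoli step itself: the time derivative $\partial_t(v_c^{p-1}) = (p-1)\,v_c^{p-2}\,\partial_t v_c$ is degenerate at $\{v_c = 0\}$, and it is precisely the lower bound $v_c \geq c > 0$, supplied by Proposition \ref{prop:maxsubsol}, that makes it legitimate to use fractional-power test functions and pass to the limit $h \to 0$ in the Steklov averages. The bound extracted from the iteration is uniform in $c$ because the problematic set $\{v_c = c\}$ lies outside the region of integration $\{v_c > k_n\}$ whenever $c < k_n$, so no constant in the iteration deteriorates as $c \to 0^+$.
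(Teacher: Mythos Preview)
Your argument is correct, but it is far more elaborate than what the paper actually does. The paper's proof is two lines: apply Proposition \ref{prop:maxsubsol} with $c=1$ to obtain the strictly positive subsolution $\max\{u,1\}$, and then simply cite Lemma~5.1 in \cite{Tuomo} (Kinnunen--Kuusi), which already contains the local boundedness for strictly positive subsolutions of Trudinger's equation. What you have sketched---the Caccioppoli inequality for $v_c=\max\{u,c\}$ followed by a De~Giorgi-type level-set iteration---is essentially a reproduction of that external lemma, so your approach is more self-contained while the paper's is shorter. One small redundancy: your final passage $c\to 0^+$ is unnecessary, since $u\leq v_c$ pointwise, so any bound $v_c\leq k_\infty$ on the inner cylinder immediately bounds $u$ without letting $c$ vary; the paper exploits exactly this by fixing $c=1$. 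Also, what you describe (truncation levels $k_n\nearrow k_\infty$ and the geometric iteration $Y_{n+1}\leq C\,b^n Y_n^{1+\sigma}$) is De~Giorgi iteration rather than Moser iteration, though the distinction is only terminological here.
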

 
 \begin{proof} By Lemma 5.1 in \cite{Tuomo}, the weak subsolution $\max\{u,1\}$ is locally bounded. So is, of course, then $u$.
\end{proof}

\paragraph{Acknowledgements:} Erik Lindgren was supported by the Swedish Research Council, grant no. 2017-03736. Peter Lindqvist was supported by The Norwegian Research Council, grant no. 250070 (WaNP).

\bigskip
\noindent {\textsf{Erik Lindgren\\  Department of Mathematics\\ Uppsala University\\ Box 480\\
751 06 Uppsala, Sweden}  \\
\textsf{e-mail}: erik.lindgren@math.uu.se\\

\noindent \textsf{Peter Lindqvist\\ Department of
   Mathematical Sciences\\ Norwegian University of Science and
  Technology\\ N--7491, Trondheim, Norway}\\
\textsf{e-mail}: peter.lindqvist@ntnu.no
\end{document}